\theoremstyle{plain}
\newtheorem{theorem}{Theorem}[section]
\newtheorem{proposition}[theorem]{Proposition}
\newtheorem{corollary}[theorem]{Corollary}
\newtheorem{lemma}[theorem]{Lemma}
\newtheorem{observation}[theorem]{Observation}
\theoremstyle{cupdefn}
\newtheorem{definition}[theorem]{Definition}
\numberwithin{equation}{section}
\newcommand{\N}{\mathbb{N}}
\newcommand{\Z}{\mathbb{Z}}
\newcommand{\F}{\mathbb{F}}
\numberwithin{equation}{section}
\tikzset{cross/.style={cross out, draw=black, minimum size=2*(#1-\pgflinewidth), inner sep=0pt, outer sep=0pt},
	%default radius will be 1pt. 
	cross/.default={2pt}}
\begin{document}

	\title[Sylow classes of reflection subgroups]{Classification of Sylow classes of parabolic and reflection subgroups in unitary reflection groups}

\author[Kane Townsend]{Kane Douglas Townsend}

\address{The University of Sydney \\ School of Mathematics and Statistics}

\email{kane.d.townsend@gmail.com}

\subjclass{primary 20F55 ; secondary 20D20, 20C33}

\keywords{Unitary reflection groups, reflection subgroups, Sylow subgroups}
	
\begin{abstract}
	Let $\ell$ be a prime divisor of the order of a finite unitary reflection group. We classify up to conjugacy the parabolic and reflection subgroups that are minimal with respect to inclusion, subject to containing an $\ell$-Sylow subgroup. The classification assists in describing the $\ell$-Sylow subgroups of unitary reflection groups up to group isomorphism. This classification also relates to the modular representation theory of finite groups of Lie type. We observe that unless a parabolic subgroup minimally containing an $\ell$-Sylow subgroup is $G$ itself, the reflection subgroup within the parabolic minimally containing an $\ell$-Sylow subgroup is the whole parabolic subgroup.
\end{abstract}

\maketitle

\section{Introduction}\label{intro}
Throughout let $G$ be a finite unitary reflection group acting on the complex vector space $V$ of dimension $n$. For each prime $\ell$ dividing the order of $G$, we classify the parabolic and reflection subgroups up to $G$-conjugacy that are minimal with respect to inclusion, subject to containing a $\ell$-Sylow subgroup. This classification has been done previously in \cite{ME} for finite real reflection groups via an algorithm based on the Borel-de Siebenthal algorithm \cite{BS}. It is clear that a parabolic/reflection subgroup minimally contains an $\ell$-Sylow subgroup of $G$ if and only if its order has the same $\ell$-adic valuation as $\vert G \vert$ and none of its parabolic/reflection subgroups have this property. 

It is sufficient to consider irreducible $G$ since the parabolic/reflection subgroups of $G=G_1 \times ... \times G_k$ are of the form $H=H_1 \times... \times H_k$ where each $H_i$ is a parabolic/reflection subgroup of $G_i$. The classification of irreducible unitary reflection groups was given by Shephard and Todd in \cite{TODD}, which can be found in Lehrer and Taylor \cite[Chap. 8]{GUSTAY}. In \cite{TAYLOR} by Taylor, a classification of parabolic and reflection subgroups of unitary reflection groups is given up to conjugacy. Our classification will follow by a direct proof for the infinite family of reflection groups $G(m,p,n)$ and case by case computations for the $34$ exceptional cases.

Recall a \emph{reflection subgroup} of $G$ is a subgroup generated by reflections and a \emph{parabolic subgroup} is the pointwise stabiliser $G_U$ of some subspace $U$ of $V$. By a theorem of Steinberg \cite[Thm. 1.5]{STEINBERG}, a parabolic subgroup is a reflection subgroup. The conjugacy class of parabolic subgroups minimally containing $\ell$-Sylow subgroups is unique since the class of parabolic subgroups is closed under conjugation and intersection. However, since the class of reflection subgroups is not closed under intersection, we do not necessarily have uniqueness of the conjugacy class of reflection subgroups minimally containing $\ell$-Sylow subgroups.

\begin{definition}
	Call the conjugacy class of parabolic subgroups minimally containing the $\ell$-Sylow subgroups the \emph{$\ell$-Sylow conjugacy class of parabolic subgroups}. Refer to such a parabolic subgroup in the conjugacy class as $P_\ell$. In the case that $G=P_\ell$ we say that $\ell$ is \emph{cuspidal} for $G$.
\end{definition}

\begin{definition}
	Call a conjugacy class of reflection subgroups minimally containing the $\ell$-Sylow subgroups a \emph{$\ell$-Sylow conjugacy classes of reflection subgroups}. Refer to such a reflection subgroup in one of these conjugacy classes as $R_\ell$. If $G=R_\ell$ we say that $\ell$ is \emph{supercuspidal} for $G$.
\end{definition}

We will first classify the $P_\ell$ up to $G$-conjugacy. It is then sufficient to consider only the cuspidal cases while classifying the $R_\ell$ up to $G$-conjugacy, since the parabolic closure of a $R_\ell$ is a $P_\ell$ by \cite[Cor. 2.5]{ME}.

The motivations for studying these minimal classes containing $\ell$-Sylow subgroups are various. A major application is in its assistance in describing the $\ell$-Sylow subgroups of unitary reflection groups. It assists since the $\ell$-Sylow subgroup of some irreducible $G$ is the $\ell$-Sylow subgroup of some supercuspidal reflection group up to group isomorphism. We describe the $\ell$-Sylow subgroups of the supercuspidal cases in Section \textsection\ref{observe}, with the supercuspidal cases summarised in Table \ref{table:sylowcases}. This gives a complete description of $\ell$-Sylow subgroups of unitary reflection groups up to group isomorphism.

Another application is in the modular representation theory of finite reductive groups. In particular, \cite[Thm. 4.2]{GHM} states the following:

\begin{theorem}[Geck-Hisse-Malle]
	Let $\mathbf{G}$ be a connected reductive group with connected centre and some $\F_q$-rational structure where $q$ is a power of a prime different from $\ell$. Let $F$ be the Frobenius morphism of $\mathbf{G}$ and $T\subset B$ be an $F$-stable torus and Borel subgroup respectively. Define $\mathcal{L}=\{(L^F_J)^n \mid J\subseteq S, n \in N^F\}$ to be the $N$-conjugates of standard Levi subgroups of $\mathbf{G}^F$, where $S$ is the standard generators of the Weyl group $W^F$ of $\mathbf{G}^F$.
	\item[(i)] If $\ell$ divides $[G:L^F]$ for all $L^F\in\mathcal{L}$$\setminus\{G^F\}$, then the $\ell$-modular Steinberg character is cuspidal.
	\item[(ii)] Let $L^F\in\mathcal{L}\setminus\{G^F\}$ such that $\ell$ does not divide $[G^F:L^F]$. If $D\in\text{Syl}_\ell (L^F)$ satisfies $C_{\mathbf{G^F}}(D) \leq L^F$, then the semisimple vertex of the $\ell$-modular Steinberg character of $\mathbf{G}^F$ is contained in $L^F$.
\end{theorem}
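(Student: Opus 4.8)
The plan is to prove both parts inside ordinary Harish-Chandra theory for $G:=\mathbf{G}^F$ in non-defining characteristic $\ell\neq p$, taking as input two standard facts about the Steinberg module. Fix an $\ell$-modular system $(K,\mathcal{O},k)$. First, $\text{St}_G$ is realised by an $\mathcal{O}$-lattice whose class in the Grothendieck group is $\sum_{J\subseteq S}(-1)^{|J|}[\mathcal{O}G/\mathcal{O}P_J]$, so its reduction $\overline{\text{St}}_G$ over $k$ is defined; the $\ell$-modular Steinberg character is the Brauer character of the distinguished simple constituent $\text{St}_\ell$ of $\overline{\text{St}}_G$. Second, for every standard Levi $L=L_J$ the Jacquet restriction satisfies ${}^*R_L^G\,\text{St}_G\cong\text{St}_L$; as ${}^*R_L^G=(-)^{U_P}$ is exact when $\ell\neq p$ (averaging over the $p$-group $U_P$ is idempotent) and all the modules are $\mathcal{O}$-free, this descends to ${}^*R_L^G\,\overline{\text{St}}_G\cong\overline{\text{St}}_L$. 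Cuspidality of $\text{St}_\ell$ means ${}^*R_L^G\,\text{St}_\ell=0$ for every proper $L\in\mathcal{L}$.

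For (i) I would reduce cuspidality to the vanishing ${}^*R_L^G\,\text{St}_\ell=0$ for each proper $L$ and then exploit exactness. Passing to the Grothendieck group gives $[{}^*R_L^G\overline{\text{St}}_G]=[\overline{\text{St}}_L]$, while $[\overline{\text{St}}_G]=[\text{St}_\ell]+(\text{lower constituents})$, so $[{}^*R_L^G\text{St}_\ell]=[\overline{\text{St}}_L]-\sum[{}^*R_L^G(\text{lower constituents})]$ and the whole of (i) comes down to showing this difference is zero. This is where the hypothesis $\ell\mid[G:L^F]$ for all proper $L\in\mathcal{L}$ enters: it forces $v_\ell(|L^F|)<v_\ell(|G|)$ uniformly, so every $\ell$-block of $L^F$ has defect strictly smaller than that of the block of $\text{St}_\ell$, and a block-theoretic comparison should show that $\text{St}_\ell$ cannot occur in the Jacquet image, with all of $\overline{\text{St}}_L$ accounted for by the lower constituents. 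The guiding heuristic is that $\text{St}_\ell$ is the constituent of $\overline{\text{St}}_G$ carrying the full Sylow $\ell$-datum of $G$, which no proper Levi of strictly smaller $\ell$-part can support.

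For (ii) the hypothesis $\ell\nmid[G:L^F]$ reverses the picture: $D\in\text{Syl}_\ell(L^F)$ is then a full Sylow $\ell$-subgroup of $G$, and since $\ell\neq p$ its elements are semisimple in $\mathbf{G}$, which is why one speaks of a semisimple vertex. Relative projectivity now replaces Jacquet vanishing: because $[G:L^F]$ is invertible in $k$, Higman's criterion makes every $kG$-module, in particular $\text{St}_\ell$, relatively $L^F$-projective, so a vertex of $\text{St}_\ell$ already lies in $L^F$. The remaining content is to localise the vertex together with its centralizer inside $L^F$, and for this I would use the Brauer construction $\overline{\text{St}}_G(D)=(\overline{\text{St}}_G)^{D}/\sum_{Q<D}\text{Tr}_Q^D\big((\overline{\text{St}}_G)^{Q}\big)$, a $k[N_G(D)/D]$-module carrying an action of $C_G(D)$. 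The hypothesis $C_{\mathbf{G}^F}(D)\le L^F$ forces this local term to be computed inside $L^F$, so a Green-correspondence argument should pin the semisimple vertex of $\text{St}_\ell$ in $L^F$ rather than merely in $N_G(D)$.

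The main obstacle I expect is the block- and decomposition-theoretic bookkeeping in (i): the self-restriction identity ${}^*R_L^G\overline{\text{St}}_G=\overline{\text{St}}_L$ governs the whole reduction, not the single head $\text{St}_\ell$, so isolating $\text{St}_\ell$, pinning down its block and defect, and proving that its Jacquet image vanishes under every proper $L$ requires the precise (triangular) shape of the unipotent decomposition matrix. This is the step that genuinely consumes the uniform hypothesis over all of $\mathcal{L}$. Once it is in hand, (i) follows formally, and (ii) is comparatively soft, with the relative-projectivity bound and the centralizer condition $C_{\mathbf{G}^F}(D)\le L^F$ performing the decisive localisation of the vertex via the Brauer construction.
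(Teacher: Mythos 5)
You should first note that the paper offers no proof of this statement at all: it is quoted verbatim (as \cite[Thm. 4.2]{GHM}) purely as motivation for studying minimal Levi/parabolic subgroups containing Sylow subgroups, and the author never uses or reproves it. So there is no internal argument to compare yours against; the only fair comparison is with the original Geck--Hiss--Malle proof, and against that standard your text is a strategy outline rather than a proof. The tools you name are the right ones (exactness of Harish-Chandra restriction ${}^*R_L^G$ in non-defining characteristic, the identity ${}^*R_L^G\,\mathrm{St}_G\cong \mathrm{St}_L$, relative projectivity via Higman's criterion, and vertex/Green-correspondence arguments), but at both decisive points you substitute an expectation for an argument. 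In (i), the entire content of the theorem is the implication ``$\ell\mid[G^F:L^F]$ for all proper $L$ $\Rightarrow$ ${}^*R_L^G\,\mathrm{St}_\ell=0$,'' and your justification is that ``a block-theoretic comparison should show'' this; the defect-group inequality you invoke does not by itself prevent $\mathrm{St}_\ell$ from appearing in ${}^*R_L^G$-images, since Harish-Chandra restriction is computed on the Levi, not on a block of $G^F$ of the same defect. There is also a definitional gap: you take $\mathrm{St}_\ell$ to be ``the distinguished simple constituent'' of $\overline{\mathrm{St}}_G$, whereas the $\ell$-modular Steinberg character in [GHM] is the restriction of the ordinary Steinberg character to $\ell$-regular classes, and its irreducibility as a Brauer character is itself a nontrivial theorem that your argument tacitly assumes.

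In (ii) the soft half (relative $L^F$-projectivity because $\ell\nmid[G^F:L^F]$, hence a vertex inside $L^F$) is fine, but the actual assertion concerns the \emph{semisimple vertex} in the technical sense of [GHM], and the step where the hypothesis $C_{\mathbf{G}^F}(D)\le L^F$ is consumed --- your Brauer-construction/Green-correspondence paragraph --- is again stated in the conditional (``should pin''). As written, neither part closes; to make this a proof you would need to supply the triangularity/unipotent-decomposition-matrix input for (i) and the precise local analysis for (ii), at which point you would essentially be reconstructing the argument of [GHM] rather than the (nonexistent) argument of this paper.
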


This theorem raises the question of finding the minimal Levi subgroups containing $\ell$-Sylow subgroups in finite reductive groups. In the case of Weyl groups, the classification of Sylow classes of parabolic subgroups answers a special case of this question, as will be seen in a forthcoming paper. Furthermore, in Enguehard and Michel \cite[Thm. 3.2]{MICHEL} it is seen that the $\ell$-Sylow subgroups of a finite reductive group depend partly on the $\ell$-Sylow subgroups of a unitary reflection group.

In \textsection\ref{prenotes} we introduce the basic definitions of unitary reflection groups and notation regarding the classification of reflection subgroups as seen in \cite{TAYLOR}. In \textsection \ref{paraclassify} we classify the $\ell$-Sylow conjugacy class of parabolic subgroups. In \textsection \ref{reflectclassify} we classify the $\ell$-Sylow conjugacy classes of reflection subgroups for the cuspidal cases, allowing us to deduce $R_\ell$ in all cases. In \textsection \ref{observe} we use our classification of the supercuspidal cases seen in Table \ref{table:sylowcases} to describe the $\ell$-Sylow subgroups of unitary reflection groups. In the tables of \textsection \ref{tables} we present the classification of $\ell$-Sylow class of parabolic subgroups, the cuspidal cases, the $\ell$-Sylow classes of reflection subgroups, the supercuspidal cases and the cases where the $\ell$-Sylow classes of reflection subgroups are not unique. By inspection of these tables we are able to note the following observation.

\begin{observation}
	If $\ell$ is not cuspidal for an irreducible $G$, it is supercuspidal for $P_\ell$.
\end{observation}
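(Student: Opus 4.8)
The plan is to reduce the statement to a claim about the irreducible factors of $P_\ell$ and then confirm that claim against the classification obtained in the preceding sections. Fix an $\ell$-Sylow subgroup $S$ of $G$ and let $R_\ell$ be a reflection subgroup of $G$ minimally containing $S$; by \cite[Cor. 2.5]{ME} its parabolic closure is $P_\ell$, so $S \le R_\ell \le P_\ell$. Assume $\ell$ is not cuspidal, i.e. $P_\ell \subsetneq G$. I want to show $R_\ell = P_\ell$, which is precisely the assertion that $\ell$ is supercuspidal for $P_\ell$ (note that since $|P_\ell|$ has the same $\ell$-adic valuation as $|G|$, the group $S$ is also an $\ell$-Sylow subgroup of $P_\ell$).

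Write $P_\ell = F_1 \times \cdots \times F_k$ as a product of irreducible reflection groups. Since a reflection of a direct product acts nontrivially on exactly one factor, every reflection subgroup of $P_\ell$ splits along this decomposition; in particular $R_\ell = R_1 \times \cdots \times R_k$ and $S = S_1 \times \cdots \times S_k$ with $R_i, S_i \le F_i$, each $S_i$ an $\ell$-Sylow subgroup of $F_i$ and each $R_i$ a reflection subgroup of $F_i$ minimally containing $S_i$ whose parabolic closure is $F_i$. Thus the observation is equivalent to showing that each factor $F_i$ is supercuspidal for $\ell$. First note that each $F_i$ is already \emph{cuspidal} for $\ell$: if some $F_i$ had a proper parabolic subgroup $Q_i$ whose order has the same $\ell$-adic valuation as $|F_i|$, then $F_1 \times \cdots \times Q_i \times \cdots \times F_k$ would be a parabolic subgroup of $G$ (a parabolic subgroup of the parabolic subgroup $P_\ell$) properly contained in $P_\ell$ yet still of the same $\ell$-adic valuation as $|G|$, contradicting the minimality of $P_\ell$. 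Hence the observation reduces to the implication \emph{cuspidal} $\Rightarrow$ \emph{supercuspidal} for each irreducible factor arising in a proper $P_\ell$.

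The final step is to verify this implication by inspection of the classification. Using the explicit description of $P_\ell$ from \textsection\ref{paraclassify}, I would list the irreducible factors $F_i$ that occur in a proper $P_\ell$ --- for the infinite family $G(m,p,n)$ directly, and for the $34$ exceptional groups from the tabulated data --- and cross-reference each against the supercuspidal cases recorded in Table \ref{table:sylowcases} and the cuspidal cases of \textsection\ref{reflectclassify}. The observation follows once one checks that no factor $F_i$ lands among the cuspidal-but-not-supercuspidal groups, equivalently among the cases where the $\ell$-Sylow class of reflection subgroups is proper, as listed in \textsection\ref{tables}. The main obstacle is exactly this finite but exhaustive comparison: whereas the reduction above is uniform, there is no evident a priori reason that a cuspidal factor of a minimal parabolic subgroup must be supercuspidal --- cuspidal groups that fail to be supercuspidal do exist --- so one must confirm case by case that such groups never appear as irreducible constituents of a proper $P_\ell$.
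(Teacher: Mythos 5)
Your proposal is correct and is essentially the paper's own argument: the paper gives no justification beyond ``by inspection of these tables,'' and your reduction of the claim to the irreducible factors of $P_\ell$ (via the splitting of reflection subgroups and Sylow subgroups along the direct product decomposition), followed by a cross-check against the supercuspidal classification, is exactly the inspection that is implicitly being performed. The finite check you defer does go through --- the only irreducible factors occurring in a proper $P_\ell$ are groups of the form $G(1,1,\ell^i)$, $G(\ell^i,\ell^j,n)$, $G(\ell^i,1,1)$, together with $L_2$ for $\ell=2$ and $E_6$ for $\ell=3$, all of which appear in Table \ref{table:sylowcases} --- so your writeup is, if anything, a more explicit justification than the one the paper records.
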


The next proposition can help to quickly observe that $\ell$ is cuspidal for $G$ in some cases. It is used in Section \ref{paraclassify} for the classifications of $\ell$-Sylow class of parabolic subgroups for the exceptional cases. The proposition involves the characterisation of a unitary reflection group $G$ in terms of its ring of invariants being a polynomial algebra, where the product of the degrees of the chosen algebraically independent homogeneous generating polynomials is the order of $G$ (see \cite[Chap. 3-4]{GUSTAY}).

\begin{proposition}\label{degrees}
	Let $G$ be an irreducible unitary reflection group on $V$ and $d_1,...,d_n$ be the degrees of the homogeneous algebraically independent polynomials generating the $G$-invariant polynomial algebra. If $\ell \mid d_i$ for all $1\leq i \leq n$ then $\ell$ is cuspidal for $G$.
\end{proposition}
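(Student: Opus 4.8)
The plan is to reduce the statement to producing a single central element of order $\ell$ in $G$. Recall that $\ell$ is cuspidal precisely when $G = P_\ell$, and that for an $\ell$-Sylow subgroup $P$ the group $P_\ell$ is (up to conjugacy) a smallest parabolic subgroup containing $P$. Since a parabolic subgroup $G_U$ contains $P$ only if $P$ fixes $U$ pointwise, i.e.\ only if $U \subseteq V^P$, it suffices to show that $V^P = 0$: then the only subspace $U$ with $G_U \supseteq P$ is $U = 0$, so the only parabolic subgroup containing $P$ is $G_0 = G$. As every $P_\ell$ contains a conjugate of $P$, this forces $P_\ell = G$.

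The key input is the classical fact that for an irreducible unitary reflection group the scalar matrices lying in $G$ constitute its centre $Z(G)$ (by Schur's lemma, using irreducibility), and that $Z(G)$ is cyclic of order $\gcd(d_1,\dots,d_n)$ (a standard fact; see \cite{GUSTAY}). The hypothesis $\ell \mid d_i$ for all $i$ is exactly the statement that $\ell \mid \gcd(d_1,\dots,d_n) = |Z(G)|$. As $Z(G)$ is cyclic, it therefore contains an element $z = \omega I$ of order $\ell$, where $\omega$ is a primitive $\ell$-th root of unity, $I$ is the identity of $\mathrm{GL}(V)$, and $z$ acts as the scalar $\omega$.

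To finish, observe that $\langle z \rangle$ is a normal $\ell$-subgroup of $G$, so $z$ belongs to every $\ell$-Sylow subgroup $P$. Because $\omega \neq 1$, the scalar $z$ fixes no nonzero vector, so $V^z = 0$ and hence $V^P \subseteq V^z = 0$. By the reduction above this yields $P_\ell = G$, that is, $\ell$ is cuspidal for $G$.

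The only nonroutine ingredient is the cited description of $Z(G)$. One direction of it, that $\zeta I \in G$ forces $\zeta^{d_i} = 1$ for every $i$, is immediate by letting $\zeta I$ act on a homogeneous basic invariant of degree $d_i$; the direction actually used here is the converse existence of the central scalar of order $\ell$, which I would quote from the structure theory in \cite{GUSTAY} (ultimately Springer's theory of regular elements, where $\gcd(d_1,\dots,d_n)$ being divisible by $\ell$ yields an $n$-dimensional $\omega$-eigenspace, i.e.\ the scalar $\omega I$) rather than reprove. Everything else is elementary Sylow theory together with the fixed-space description of parabolic subgroups, so I expect no real obstacle beyond correctly invoking this one fact.
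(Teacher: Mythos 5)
Your proof is correct and follows essentially the same route as the paper: both identify $Z(G)$ with the scalars in $G$ via Schur's lemma, use $|Z(G)|=\gcd(d_1,\dots,d_n)$ to produce a central $\ell$-element, and conclude that no proper parabolic subgroup can contain an $\ell$-Sylow subgroup because nontrivial scalars fix no nonzero vector. Your phrasing via $V^P=0$ and the explicit remark that the central $\ell$-element lies in every $\ell$-Sylow subgroup merely spells out steps the paper leaves implicit.
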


\begin{proof}
	By \cite[Corollary 3.24]{GUSTAY} the centre $Z(G)$ has order $\text{gcd}(d_1,...,d_n)$. Then by Schur's Lemma $Z(G)$ acts on $V$ as scalar multiplication, so is a cyclic group acting on $V$ as the $\text{gcd}(d_1,...,d_n)^{\text{th}}$ roots of unity. Let $P$ be a proper parabolic subgroup of $G$, hence fixing some nonzero subspace $U$ of $V$. Then $P \cap Z(G)=\{1\}$, as this is the only element of $Z(G)$ that fixes $U$ pointwise. Therefore, $P$ cannot contain a $\ell$-Sylow subgroup if $\vert Z(G)\vert=\text{gcd}(d_1,...,d_n)$ is divisible by $\ell$.
\end{proof}

We can also observe from the tables which cases that the $\ell$-Sylow class of reflection subgroups is not unique. They are collected in Table \ref{table:notunique} for clarity. It is interesting to note that in the real reflection group case the non-unique classes were made of isomorphic groups generated by dual root systems as seen in \cite{ME}. However, extending to the unitary reflection groups, there are non-unique classes that are not made of isomorphic groups as seen in the cases $G_9$ with $\ell=3$, $G_{17}$ with $\ell=3$, $G_{18}$ with $\ell=2$, $G_{21}$ with $\ell=5$ and $G_{26}=M_3$ with $\ell=3$.

\vspace{12pt}
\section{Notation and preliminaries}\label{prenotes}

A \emph{complex reflection} is a linear transformation of $V$ with finite order and fixed space is a hyperplane. A \emph{unitary reflection group} or \emph{finite complex reflection group} $G$ on $V$ is a finite group generated by complex reflections. The name unitary reflection group comes from the fact that every finite subgroup of $GL(V)$ preserves a positive definite hermitian form $(-,-)$ on $V$ and so the group is unitary with respect to this hermitian form. 

The imprimitive unitary reflection group $G(m,p,n)$ introduced by Shephard and Todd is defined in \cite[Section 3]{TAYLOR} in the following way. Let $[n]:=\{1,2,...,n\}$ and $\{e_i \ \vert \ i\in [n]\}$ be an orthonormal basis for the $n$ dimensional complex vector space $V$. Let $\mu_m$ be the group of $m$th roots of unity. We write $\hat{\theta}$ as the linear transformation that maps $e_i$ to $\theta(i)e_i$ for each $1\leq i\leq n$, where $\theta: [n]\to\mu_m$. Let $p\mid m$, define $A(m,p,n)$ to be the group of all linear transformations $\hat{\theta}$ such that $\prod_{i=1}^{n}\theta(i)^{\frac{m}{p}}=1$. We also define the action of $\pi\in \text{Sym}(n)$ on $V$ to be $\pi(e_i)=e_{\pi(i)}$. The group $G(m,p,n)$ is the semidirect product of $A(m,p,n)$ by Sym$(n)$. Shephard and Todd \cite{TODD} proved every irreducible unitary reflection group belongs to a list $G_k$ for $1\leq k\leq 37$. We call the $k$ the \emph{Shephard-Todd number} of the irreducible unitary reflection group. The Shephard-Todd numbering can be found in \cite{GUSTAY}. For example, $G_1$ is $G(1,1,n)\cong Sym(n)$ with $n\geq2$, $G_2$ is $G(m,p,n)$ with $m>1$ and $n>1$, $G_3$ is $G(m,1,1)\cong \mathcal{C}_m$ with $m>1$, $G_4-G_{22}$ are the primitive rank two groups and $G_{23}-G_{37}$ are the primitive groups of rank greater than two.

We now introduce some notation seen in \cite{TAYLOR}. This is necessary to understand the classifications of parabolic subgroups and reflection subgroups of $G(m,p,n)$ up to conjugacy.
\begin{definition}
	Call $(m',p',n')$ a \emph{feasible triple} for $G(m,p,n)$ if $m,p$ and $n$ are positive integers and $n'\leq n, p'\mid m', m'\mid m$ and $\frac{m'}{p'}\mid\frac{m}{p}$.
\end{definition}

By \cite[Lemma 3.3]{TAYLOR} we have $(m',p',n')$ is a feasible triple if and only if $G(m',p',n')$ arises as a reflection subgroup of $G(m,p,n)$. We define a total order on the feasible triples by writing $(m_1,p_1,n_1)\geq(m_2,p_2,n_2)$ if $n_1>n_2$; or $n_1=n_2$ and $m_1>m_2$; or $n_1=n_2$, $m_1=m_2$ and $p_1\geq p_2$.

\begin{definition}
	A \emph{partition} $\lambda$ of $n$, written $\lambda\vdash n$, is a sequence $\lambda=(n_1,n_2,...,n_d)$ where $n_1, n_2,...,n_d$ are positive integers such that $n=\sum_{i=1}^{d}n_i$ and $n_1\geq n_2\geq...\geq n_d$.
\end{definition}

\begin{definition}
	An \emph{augmented partition} for $G(m,p,n)$ is a decreasing sequence $\Delta=[\tau_1,\tau_2,...,\tau_d]$ of feasible triples $\tau_i=(m_i,p_i,n_i)$ such that $\lambda=(n_1,n_2,...,n_d)\vdash n$.
\end{definition}

Let $k_0=0$ and for $1\leq i \leq d$ let $k_i=n_1+n_2+...+n_i$, then set $\Lambda_i=\{e_j \ \vert \ k_{i-1}< j\leq k_i \}$. We say $(\Lambda_1,\Lambda_2,...,\Lambda_d)$ is the \emph{standard partition} of $\Lambda=\{e_1,...,e_n\}$ associated with $\Delta$. The standard reflection subgroup of type $\Delta$ is $$G_\Delta = \prod_{i=1}^{d} G(m_i,p_i,n_i)$$ where $G(m_i,p_i,n_i)$ acts on the subspace of $V$ with basis $\Lambda_i$. For an $\alpha \in\mu_m$ define $\theta_\alpha:[n]\to\mu_m$ by $\theta_\alpha(1)=\alpha$ and $\theta_\alpha(i)=1$ for $i>1$. Then define
$$G_\Delta^\alpha=\hat{\theta}_\alpha G_\Delta\hat{\theta}_\alpha^{-1}.$$ 

Furthermore, \cite[Theorem 3.9]{TAYLOR} states that for a given augmented partition $$\Delta=[(m_1,p_1,n_1),(m_2,p_2,n_2),...,(m_d,p_d,n_d)]$$ for $G(m,p,n),$ then for $\alpha,\beta \in\mu_m$, we have $G_\Delta^\alpha$ and $G_{\Delta'}^\beta$ are conjugate in $G(m,p,n)$ if and only if $\Delta=\Delta'$ and $\alpha\beta^{-1}\in\mu_k$, where $$k=\frac{m}{\text{gcd}(p,n_1,n_2,...,n_d,\frac{m}{m_1},\frac{m}{m_2},...,\frac{m}{m_d})}.$$

We now can present the classifications of reflection and parabolic subgroups of $G(m,p,n)$, which are given in \cite[Theorem 3.7]{TAYLOR} and \cite[Theorem 3.11]{TAYLOR} respectively. 

\begin{theorem}\label{reflectionsandparabolics}
	Any reflection subgroup of $G(m,p,n)$ is conjugate to $G_\Delta^\alpha$ for some augmented partition $\Delta$ and $\alpha\in \mu_m$. The parabolic subgroups of $G(m,p,n)$ are of the form
	\item[(i)] $G_\Delta= G(m,p,n_j)\times\prod_{i\neq j} G(1,1,n_i)$, where $(n_i)\vdash n$,
	
	\item[(ii)] $G_\Delta^\alpha=\hat{\theta}_\alpha G_\Delta \hat{\theta}_\alpha^{-1},$ where all feasible triples have $m_i=p_i=1$ and $\alpha\in\mu_m$.
\end{theorem}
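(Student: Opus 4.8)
The plan is to follow Taylor's combinatorial analysis, proving the two assertions by reducing an arbitrary reflection subgroup, respectively parabolic subgroup, to one of the standard shapes $G_\Delta^\alpha$. Everything rests on an explicit description of the reflections of $G(m,p,n)$: there are \emph{diagonal} reflections $\hat\theta$ scaling a single coordinate $e_i$ by some $\zeta\in\mu_{m/p}$ and fixing the rest, with reflecting hyperplane $\{v:v_i=0\}$; and \emph{transposition-type} reflections $\hat\theta\pi$ with $\pi=(ij)$, sending $e_i\mapsto\zeta e_j$ and $e_j\mapsto\zeta^{-1}e_i$ for some $\zeta\in\mu_m$, with reflecting hyperplane $\{v:v_i=\zeta v_j\}$. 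First I would check that these exhaust the reflections and record their hyperplanes, since both halves of the theorem are governed by which reflections fix a prescribed subspace.

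For the first assertion, let $H\le G(m,p,n)$ be generated by reflections. I would build the graph on $[n]$ whose edges are the transpositions $(ij)$ appearing in transposition-type reflections of $H$; its connected components partition $[n]$, and after conjugating by a permutation I may take them to be the standard blocks $\Lambda_1,\dots,\Lambda_d$. On each block the reflections of $H$ generate an imprimitive group, identified with $G(m_i,p_i,n_i)$ for a feasible triple, the conditions $p_i\mid m_i\mid m$ and $\tfrac{m_i}{p_i}\mid\tfrac{m}{p}$ following from the constraints $\zeta\in\mu_m$ on the transposition twists and $\zeta\in\mu_{m/p}$ on the diagonal scalings. A further conjugation by a diagonal element absorbs the residual twist into the standard $\hat\theta_\alpha$, placing $H$ in the form $G_\Delta^\alpha$; the conjugacy criterion already quoted (Taylor's Theorem 3.9) then determines precisely which pairs $(\Delta,\alpha)$ coincide up to conjugacy.

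For the parabolic subgroups I would invoke Steinberg's theorem, so that $G_U$ is generated by the reflections whose hyperplane contains $U$, and then read these off from the coordinate functionals $e_i^\ast|_U$. Writing $Z=\{i:e_i^\ast|_U=0\}$ for the coordinates vanishing on $U$, every diagonal reflection at $i\in Z$ and every transposition-type reflection between coordinates of $Z$ fixes $U$ pointwise, so the reflections supported on $Z$ generate a full factor $G(m,p,|Z|)$; this is the single special block of form (i). Outside $Z$ no diagonal reflection survives, while a transposition-type reflection $e_i\mapsto\zeta e_j$ fixes $U$ exactly when $e_i^\ast|_U=\zeta\,e_j^\ast|_U$, so the remaining coordinates split into blocks according to proportionality of their functionals, each block contributing a twisted copy of $\mathrm{Sym}(n_i)=G(1,1,n_i)$.

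The hard part, and the step I would work through most carefully, is removing these twists using the constraint $\prod_i\theta(i)^{m/p}=1$ that defines $A(m,p,n)$. Conjugating by a diagonal element of $A(m,p,n)$ straightens the proportionality ratios on the non-zero blocks, but the product relation restricts which such elements are available. When $Z\ne\varnothing$ the coordinates of the zero block can carry the compensating phase, so the non-zero blocks straighten completely to untwisted factors $G(1,1,n_i)$ and one lands in form (i); when $Z=\varnothing$ a single global phase survives, leaving exactly the one-coordinate twist $\hat\theta_\alpha$ of form (ii). Pinning down this dichotomy, and checking that $\alpha$ then ranges over the correct coset of roots of unity, is where the argument requires the most care.
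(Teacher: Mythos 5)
Your outline is sound and is essentially the argument of Taylor's paper (Theorems 3.7 and 3.11 of \cite{TAYLOR}), which is exactly where the present paper sources this statement without reproving it: the same enumeration of diagonal and transposition-type reflections, the same block decomposition of $[n]$ via connected components, and the same use of Steinberg's theorem plus the proportionality of the functionals $e_i^\ast|_U$ to identify the parabolic, with the product constraint $\prod_i\theta(i)^{m/p}=1$ governing which twists can be conjugated away. You have also correctly isolated the genuinely delicate step, namely the $Z=\varnothing$ versus $Z\neq\varnothing$ dichotomy that separates forms (i) and (ii).
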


\vspace{12pt}
\section{Classifying the $\ell$-Sylow  Class of Parabolic Subgroups}\label{paraclassify}

We will now classify the $\ell$-Sylow class of parabolic subgroups for each irreducible $G$. We define the $\ell$-adic valuation of a positive integer as $\nu_\ell(n):=\text{max}\{v\in\N \ | \ \ell^v | n \}$. Since the $\ell$-Sylow class of parabolic subgroups is always unique, it is made up of the smallest order parabolic subgroups whose order have the same $\ell$-adic valuation as $|G|$. Note that we can use Proposition \ref{degrees} to quickly observe cuspidal cases whenever it is applicable.

For the exceptional cases $G_4-G_{37}$ we have the classification of parabolic subgroups of rank greater than two primitive unitary reflection groups directly from \cite{TAYLOR}. For the rank two primitive cases, which are not included in the tables of \cite{TAYLOR}, we ran the MAGMA code mentioned in \cite{TAYLOR} found at \url{http://www.maths.usyd.edu.au/u/don/software.html} to see the parabolic subgroups of these cases. This gives the classification for $G_k$ for $4\leq k\leq 37$ as found in Table \ref{table:parabolic}. 

For $G_1=G(1,1,n)$, we have already done the classification in \cite[Table 1]{ME} as it is the same as the classification for the real reflection group of type $A_{n-1}$. For $G_3=G(m,1,1)$, where $m>1$ the classification is simply the whole group as it has no proper parabolic subgroup other than the trivial group by Theorem \ref{reflectionsandparabolics}. These classifications of $\ell$-Sylow classes of parabolic subgroups are found in Table \ref{table:parabolic}. We now note the following results regarding $\ell$ divisibility of factorials since $\nu_\ell(|G(m,p,n)|)=n\nu_\ell(m)-\nu_\ell(p)+\nu_\ell(n!)$. These results are required for the case $G_1$ done in \cite{ME} and $G_2$ which in one case we will see reduces to the case of $G_1$. The results will also be used in \textsection \ref{reflectclassify}. First we state a form of Kummer's Theorem for multinomial coefficients.

\begin{lemma}\label{KUMMER}
	If $\lambda\vdash n$, then $\nu_\ell(n!)-\sum_{i=0}^{k}\nu_\ell(\lambda_i!)$ is equal to the number of carries when summing $\lambda_i$ in base $\ell$.
\end{lemma}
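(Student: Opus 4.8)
The plan is to reduce the statement to Legendre's formula for the $\ell$-adic valuation of a factorial, namely $\nu_\ell(n!) = (n - s_\ell(n))/(\ell-1)$, where $s_\ell(n)$ denotes the sum of the base-$\ell$ digits of $n$. Applying this to $n!$ and to each $\lambda_i!$ and using that $n = \sum_i \lambda_i$, the terms $n$ and $\sum_i \lambda_i$ cancel, leaving
$$\nu_\ell(n!) - \sum_i \nu_\ell(\lambda_i!) = \frac{1}{\ell-1}\left( \sum_i s_\ell(\lambda_i) - s_\ell(n) \right).$$
Thus it remains to show that $\sum_i s_\ell(\lambda_i) - s_\ell(n) = (\ell-1)C$, where $C$ is the number of carries that occur when the $\lambda_i$ are summed in base $\ell$.

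To establish this identity I would perform the base-$\ell$ addition column by column. Writing $d_{i,j}$ for the $j$-th base-$\ell$ digit of $\lambda_i$ and $c_j$ for the carry into column $j$ (with $c_0 = 0$), each column produces a resulting digit $r_j$ and a carry-out $q_j = c_{j+1}$ satisfying $c_j + \sum_i d_{i,j} = \ell q_j + r_j$. Summing these column equations over all $j$, and using that $\sum_j c_j = \sum_j q_j = C$ (since the carry out of one column is the carry into the next and all carries eventually vanish), together with $\sum_j \sum_i d_{i,j} = \sum_i s_\ell(\lambda_i)$ and $\sum_j r_j = s_\ell(n)$, yields $C + \sum_i s_\ell(\lambda_i) = \ell C + s_\ell(n)$, which rearranges to the required identity. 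Combining this with the displayed equation above completes the proof.

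The main point requiring care is the bookkeeping in the definition of ``number of carries'' when more than two summands are involved: the carry out of a single column need not be $1$, so $C$ must be interpreted as the total of the carry magnitudes $\sum_j q_j$ rather than merely a count of columns in which a carry occurs. With this convention the telescoping identity $\sum_j c_j = C$ is immediate and the argument proceeds cleanly. I expect Legendre's formula and the digit-sum cancellation to be routine; the only genuine subtlety is fixing this convention and verifying the single column identity.
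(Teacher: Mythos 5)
Your proof is correct, and it is worth noting that the paper offers no proof to compare against: the lemma is introduced with ``First we state a form of Kummer's Theorem for multinomial coefficients'' and is invoked as a known classical fact (only the corollary following it is attributed to a proof in earlier work). Your argument is the standard derivation of that classical fact and every step checks out: Legendre's formula $\nu_\ell(n!)=\bigl(n-s_\ell(n)\bigr)/(\ell-1)$, the cancellation coming from $n=\sum_i\lambda_i$, and the column identity $c_j+\sum_i d_{i,j}=\ell c_{j+1}+r_j$ summed over $j$ with the telescoping $\sum_j c_j=\sum_j q_j=C$. Moreover, the point you flag at the end is not mere pedantry but a genuine sharpening of the statement as written: when more than two parts are summed at once, the carry out of a column can exceed $1$, so ``number of carries'' must mean the total carry magnitude $\sum_j q_j$ (equivalently, the total count of carries when the $\lambda_i$ are added sequentially two at a time, which agrees with $\sum_j q_j$ by a telescoping digit-sum computation). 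The paper leaves this convention implicit, so your proof both establishes the lemma and pins down the reading under which it is true; this is exactly the form in which the lemma is used later, e.g.\ in the proof of Theorem \ref{g2reflection}, where only the equality of $\ell$-adic valuations is needed.
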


Furthermore, we have the following Corollary of Lemma \ref{KUMMER} proved in \cite[Lemma 2.6]{ME}.

\begin{corollary}\label{minimisepartitionorder}
	Let the base-$\ell$ expression of $n$ be $(b_rb_{r-1}...b_1b_0)_\ell$. Then the partition $\lambda\vdash n$ that provides the minimum of the set $$\left\{\prod_{i=1}^{k} \lambda_i! \ \bigg| \ \nu_\ell(n!)=\sum_{i=1}^{k}\nu_\ell(\lambda_i!)\right\}$$ is given by the join $\lambda=\lambda^r\cup\lambda^{r-1}\cup...\cup\lambda^{1}\cup\lambda^{0}$ where $\lambda^j=(\ell^j,...,\ell^j)$ with length $b_j$.
\end{corollary}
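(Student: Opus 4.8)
The plan is to combine Lemma~\ref{KUMMER} with an elementary monotonicity argument about splitting parts. By Lemma~\ref{KUMMER}, the constraint $\nu_\ell(n!)=\sum_i\nu_\ell(\lambda_i!)$ is equivalent to requiring that \emph{no carries} occur when the parts $\lambda_i$ are summed in base $\ell$; call such a partition \emph{feasible}. Writing each part in base $\ell$ as $\lambda_i=\sum_j c_j(\lambda_i)\ell^j$ with $0\le c_j(\lambda_i)<\ell$, feasibility says exactly that $\sum_i c_j(\lambda_i)=b_j$ for every digit position $j$. First I would check that the proposed partition is feasible: it uses $b_j$ parts equal to $\ell^j$, each contributing $1$ to digit $j$ and $0$ elsewhere, so the total at position $j$ is $b_j<\ell$ and $\sum_j b_j\ell^j=n$.

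The heart of the argument is a splitting step. For positive integers $a,b$ with $m=a+b$, the identity $m!=\binom{m}{a}a!\,b!$ gives $m!\ge a!\,b!$, with strict inequality whenever $0<a<m$ since then $\binom{m}{a}\ge 2$. Thus replacing a part $m$ of a partition by two positive parts $a,b$ summing to $m$ strictly decreases $\prod_i\lambda_i!$. I would next show that such a split can be chosen to preserve feasibility precisely when $m$ is not a power of $\ell$: if some digit $c_j(m)\ge 2$, peel off $a=\ell^j$ and $b=m-\ell^j$; if instead $m$ has two distinct nonzero digit positions $j_1<j_2$, again take $a=\ell^{j_1}$ and $b=m-\ell^{j_1}$. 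In either case the digit-wise sums satisfy $c_j(a)+c_j(b)=c_j(m)$ for all $j$, so no new carry is created and the total digit sums---hence feasibility of the whole partition---are unchanged.

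It then follows that a minimizer cannot contain any part that is not a power of $\ell$, for such a part could be split as above into a feasible partition of strictly smaller product. Since the feasible set is finite a minimizer exists, so every minimizer consists solely of powers of $\ell$. For such a partition the feasibility condition $\sum_i c_j(\lambda_i)=b_j$ forces exactly $b_j$ parts equal to $\ell^j$, because distinct powers of $\ell$ contribute to distinct digit positions. This determines the partition uniquely and identifies it with the claimed join $\lambda=\lambda^r\cup\lambda^{r-1}\cup\cdots\cup\lambda^0$.

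I expect the main obstacle to be the feasibility-preserving split rather than the factorial inequality, which is immediate. One must verify carefully that a part failing to be a power of $\ell$ can always be broken into two \emph{positive} pieces without creating a carry in base $\ell$; the digit-peeling construction above handles both obstructions (a digit exceeding $1$, and the presence of more than one nonzero digit), and checking that these are the only two cases---equivalently, that the parts admitting no feasible split are exactly the powers of $\ell$---is the step that needs the most care.
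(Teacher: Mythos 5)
Your argument is correct and complete. Note that the paper itself offers no proof of this corollary: it is stated as a consequence of Lemma~\ref{KUMMER} and the proof is delegated entirely to the citation of \cite[Lemma 2.6]{ME}, so there is no in-text argument to compare against line by line. Your derivation is exactly the kind of argument that citation stands in for: Kummer's theorem converts the valuation constraint into the carry-free (digit-additive) condition $\sum_i c_j(\lambda_i)=b_j$; the identity $m!=\binom{m}{a}\,a!\,b!$ shows any split of a part into two positive pieces strictly decreases the product; and your digit-peeling construction ($a=\ell^{j}$ for a position $j$ with $c_j(m)\geq 2$, or the lowest nonzero position when there are at least two) correctly produces a carry-preserving split of any part that is not a power of $\ell$, covering both possible obstructions. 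You are also right that only this one direction of the ``precisely when'' is needed: existence of a minimizer over the finite feasible set, together with the impossibility of non-power parts in a minimizer, already forces all parts to be powers of $\ell$, and then the digit-sum condition pins the multiplicity of $\ell^j$ to exactly $b_j$, giving uniqueness. The only cosmetic caveat is that parts equal to $1=\ell^0$ cannot be split, but they are powers of $\ell$ and so cause no trouble. I see no gap.
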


\begin{theorem}\label{g2parabolic}
	Let $\ell$ be a prime divisor of the order of $G_2=G(m,p,n)$, where $m>1$ and $n>1$. Let the base-$\ell$ expression of $n$ be $(b_kb_{k-1}...b_1b_0)_\ell$. 
	\begin{itemize}
		\item (i) If $\ell\mid m$, then the $\ell$-Sylow class of parabolic subgroups of $G_2$ has a unique element $G(m,p,n)$.
		\item (ii)  If $\ell\nmid m$, then the $\ell$-Sylow class of parabolic subgroups of $G_2$ is the $G_2$-conjugates of	$\prod_{i=1}^{k} G(1,1,\ell^i)^{b_i}$.
	\end{itemize}
\end{theorem}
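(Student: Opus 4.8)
The plan is to combine the valuation formula $\nu_\ell(|G(m,p,n)|) = n\nu_\ell(m) - \nu_\ell(p) + \nu_\ell(n!)$ with the explicit description of parabolic subgroups in Theorem \ref{reflectionsandparabolics}, and in each case to decide which parabolic has the same $\ell$-adic valuation as $G$ while having minimal order. Write $a := \nu_\ell(m)$ and note $\nu_\ell(p) \leq a$ since $p \mid m$. For a type (i) parabolic $G(m,p,n_j)\times\prod_{i\neq j}\mathrm{Sym}(n_i)$ with $(n_i)\vdash n$ the valuation is $n_j a - \nu_\ell(p) + \sum_i \nu_\ell(n_i!)$, and for a type (ii) parabolic, a $G$-conjugate of $\prod_i \mathrm{Sym}(n_i)$, the valuation is $\sum_i \nu_\ell(n_i!)$. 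In both cases $\sum_i \nu_\ell(n_i!) \le \nu_\ell(n!)$, with equality exactly when there are no carries in adding the $n_i$ in base $\ell$, by Lemma \ref{KUMMER}.

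For part (i), suppose $\ell \mid m$, so $a \ge 1$. I would first remark that Proposition \ref{degrees} does not apply directly: the degrees of $G(m,p,n)$ are $m, 2m, \ldots, (n-1)m, nm/p$, and although $\ell$ divides the first $n-1$ of these, it need not divide $nm/p$, so $\ell \nmid \gcd(d_1,\ldots,d_n)$ in general. Instead I argue directly that every proper parabolic has strictly smaller $\ell$-valuation than $G$. For a proper type (i) parabolic $n_j < n$, so $\nu_\ell(|G|) - (n_j a - \nu_\ell(p) + \sum_i \nu_\ell(n_i!)) = (n - n_j)a + (\nu_\ell(n!) - \sum_i \nu_\ell(n_i!)) \ge (n-n_j)a \ge a \ge 1 > 0$. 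For a type (ii) parabolic the gap is $n a - \nu_\ell(p) + (\nu_\ell(n!) - \sum_i \nu_\ell(n_i!)) \ge na - \nu_\ell(p) \ge 2a - a = a \ge 1 > 0$, using $n \ge 2$ and $\nu_\ell(p)\le a$. Hence no proper parabolic contains an $\ell$-Sylow subgroup, so $P_\ell = G$ and $\ell$ is cuspidal.

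For part (ii), suppose $\ell \nmid m$; then $\nu_\ell(m) = \nu_\ell(p) = 0$, so $\nu_\ell(|G|) = \nu_\ell(n!)$ and the valuation of every parabolic of either type collapses to $\sum_i \nu_\ell(n_i!)$. Thus a parabolic contains an $\ell$-Sylow subgroup precisely when $\sum_i \nu_\ell(n_i!) = \nu_\ell(n!)$, and among all such parabolics I must minimise the order. Since $|G(m,p,n_j)| = m^{n_j} n_j!/p \ge n_j!$ for $n_j \ge 1$ when $m>1$, replacing the factor $G(m,p,n_j)$ of a type (i) parabolic by $\mathrm{Sym}(n_j)$ never increases the order while preserving the valuation; hence the minimum is attained by a type (ii) parabolic $\prod_i \mathrm{Sym}(n_i) = \prod_i G(1,1,n_i)$. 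Minimising $\prod_i n_i!$ subject to $\sum_i \nu_\ell(n_i!) = \nu_\ell(n!)$ is exactly the content of Corollary \ref{minimisepartitionorder}, whose optimal partition is the join of the blocks $\lambda^j = (\ell^j,\ldots,\ell^j)$ of length $b_j$. After discarding the trivial parts of size $\ell^0 = 1$ this yields the parabolic $\prod_{i=1}^k G(1,1,\ell^i)^{b_i}$, and I would finally check that the triples $(1,1,\ell^i)$ are feasible and assemble into an augmented partition of $n$, so that this is genuinely a parabolic subgroup; uniqueness of the $\ell$-Sylow class of parabolic subgroups then gives the claimed $G$-conjugacy class.

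The main obstacle is the careful order comparison in part (ii): one must rule out that some type (i) parabolic, carrying the large factor $G(m,p,n_j)$, could nevertheless beat every product of symmetric groups, and simultaneously confirm that the partition furnished by Corollary \ref{minimisepartitionorder} corresponds to an honest parabolic (not merely a reflection subgroup) via the feasible-triple bookkeeping of Theorem \ref{reflectionsandparabolics}. The valuation computations themselves are routine once the formula and Lemma \ref{KUMMER} are in hand.
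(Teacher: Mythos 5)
Your proof is correct, but it takes a genuinely different route from the paper's in both parts. For (i), the paper avoids valuation counting entirely: it observes that when $\ell\mid m$ the $\ell$-Sylow subgroup of the abelian normal subgroup $A(m,p,n)$ fixes only the zero vector of $V$, so any parabolic subgroup $G_U$ containing an $\ell$-Sylow subgroup of $G$ must have $U=\{0\}$, i.e.\ must be $G$ itself; this is a fixed-space argument in the spirit of Proposition \ref{degrees} (whose inapplicability here you rightly flag, since the degree $nm/p$ need not be divisible by $\ell$), and it requires no classification of parabolic subgroups at all. Your alternative --- bounding the valuation gap by $(n-n_j)a$ for type (i) parabolics and by $na-\nu_\ell(p)\geq(n-1)a\geq 1$ for type (ii) parabolics from Theorem \ref{reflectionsandparabolics} --- is equally valid and has the virtue of being uniform with your treatment of part (ii). For (ii), the paper simply notes that the parabolic $G(1,1,n)$ already contains an $\ell$-Sylow subgroup and ``reduces to the case of $G_1$'': implicitly it uses closure of parabolic subgroups under intersection (so $P_\ell$ lies inside a conjugate of $G(1,1,n)$, and a parabolic of a parabolic is parabolic) together with the classification for $A_{n-1}$ imported from \cite{ME}. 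You instead redo that minimisation inline via Lemma \ref{KUMMER} and Corollary \ref{minimisepartitionorder}, and you additionally carry out the explicit order comparison ($m^{n_j}/p\geq 1$) showing no type (i) parabolic can beat a product of symmetric groups --- a step the paper's intersection argument sidesteps entirely. The paper's route is shorter and more conceptual; yours is more self-contained, and it makes visible the combinatorial core (Kummer plus the minimising partition) that the paper delegates to the cited earlier work.
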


\begin{proof}
	Suppose $\ell\mid m$. By definition of $A(m,p,n)$, its $\ell$-Sylow subgroups will only fix the zero vector of $V$. Therefore, a parabolic subgroup containing a non-trivial $\ell$-Sylow subgroup of $G(m,p,n)$ will only fix zero, and so we have the result.
	
	Now suppose $\ell \nmid m$.  It is clear that $G(1,1,n)$ contains an $\ell$-Sylow subgroup of $G$, reducing to the case of $G_1$.
\end{proof}

\vspace{12pt}
\section{Classifying the $\ell$-Sylow Class of Reflection Subgroups}\label{reflectclassify}

We classify the $\ell$-Sylow conjugacy classes of reflection subgroups of unitary reflection groups for the cuspidal cases seen in Table \ref{table:cuspidalcases}.  In \cite{TAYLOR} a classification of reflection subgroups is given up to conjugacy. Using the tables in \cite{TAYLOR} we can deduce the classification of $\ell$-Sylow conjugacy classes of reflection subgroups for $G_4-G_{37}$. We again note that the tables for $G_4-G_{22}$ are not included in the paper, but can be found by running the MAGMA code in the link previously given in \textsection \ref{paraclassify}. The  classification of $\ell$-Sylow conjugacy classes of reflection subgroups can be found in Tables \ref{table:reflection} and \ref{table:reflectionrank2}. The $\ell$-Sylow conjugacy classes of reflection subgroups of $G_1$ seen in Table \ref{table:reflection} follows from \cite{ME}. We now classify for the cases $G_2$ and $G_3$.

\begin{theorem}\label{g2reflection}
	Let $\ell$ be a prime divisor of the order of $G_2=G(m,p,n)$, where $m>1$ and $n>1$. Let the base-$\ell$ expression of $n$ be $(b_kb_{k-1}...b_1b_0)_\ell$.
	\begin{itemize}
		\item[(i)] If $\ell \mid p$, then the $\ell$-Sylow classes of reflection subgroups are the conjugacy classes of $\hat{\theta}_\alpha G(\ell^{\nu_\ell(m)},\ell^{\nu_\ell(p)},n) \hat{\theta}_\alpha^{-1}$, where $\alpha\in\{e^{\frac{i\pi k}{m}} \  | \ 0 \leq k\leq \text{gcd}(\frac{p}{\ell^{\nu_\ell(p)}}, n)-1 \}$.
		\item[(ii)] If $\ell \nmid p$, then the $\ell$-Sylow class of reflection subgroups is the unique conjugacy class of $\prod_{i=0}^{k}G(\ell^{\nu_\ell(m)},1,\ell^i)^{b_i}$.
	\end{itemize}
\end{theorem}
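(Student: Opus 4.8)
The plan is to work entirely with the classification of reflection subgroups from Theorem \ref{reflectionsandparabolics}, reducing the statement to a combinatorial optimisation over augmented partitions. Writing $a=\nu_\ell(m)$ and $c=\nu_\ell(p)$, I would first record that $\nu_\ell(|G(m,p,n)|)=na-c+\nu_\ell(n!)$, and that for a standard reflection subgroup $G_\Delta$ of type $\Delta=[(m_1,p_1,n_1),\dots,(m_d,p_d,n_d)]$ one has $\nu_\ell(|G_\Delta|)=\sum_{i}\bigl(n_i\nu_\ell(m_i)-\nu_\ell(p_i)+\nu_\ell(n_i!)\bigr)$. Such a $G_\Delta^\alpha$ contains an $\ell$-Sylow subgroup of $G_2$ exactly when the excess $E:=\nu_\ell(|G_2|)-\nu_\ell(|G_\Delta|)$ vanishes, and does so minimally when in addition every proper reflection subgroup has strictly positive excess.

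The heart of the argument is a single inequality. Feasibility of $(m_i,p_i,n_i)$ gives $\tfrac{m_i}{p_i}\mid\tfrac{m}{p}$, hence $\nu_\ell(p_i)\ge \nu_\ell(m_i)-a+c$; combining this with $\nu_\ell(p_i)\ge 0$ and setting $x_i:=a-\nu_\ell(m_i)\ge 0$, I would show that the per-factor quantity
\[
f_i:=n_i x_i+\nu_\ell(p_i)
\]
satisfies $f_i\ge c$ for every $i$ (splitting into $x_i\ge c$ and $0\le x_i<c$, and using $n_i\ge 1$). Rewriting the excess as $E=\sum_i f_i-c+(\text{carries})$, where by Lemma \ref{KUMMER} the carry term $\nu_\ell(n!)-\sum_i\nu_\ell(n_i!)$ is nonnegative, yields the key bound $E\ge (d-1)c+(\text{carries})$.

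From here the two cases fall out. When $\ell\mid p$, so $c\ge 1$, the bound forces $d=1$ and zero carries whenever $E=0$; the single factor then has $n_1=n$, and since $n>1$ a short inspection of $f_1=c$ rules out $x_1\ge 1$, so $\nu_\ell(m_1)=a$ and $\nu_\ell(p_1)=c$. Minimality under inclusion pins down $m_1=\ell^{a}$ and $p_1=\ell^{c}$, since any triple with a larger prime-to-$\ell$ part properly contains $G(\ell^{a},\ell^{c},n)$ with the same valuation. When $\ell\nmid p$, so $c=0$, the bound forces every $f_i=0$ (hence $\nu_\ell(m_i)=a$, $\nu_\ell(p_i)=0$, and minimally $m_i=\ell^{a}$, $p_i=1$) together with zero carries; among carry-free partitions of $n$ the order $\prod_i n_i!$ is minimised, by Corollary \ref{minimisepartitionorder}, by taking $b_i$ parts equal to $\ell^{i}$, giving $\prod_{i=0}^{k}G(\ell^{a},1,\ell^{i})^{b_i}$.

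Finally I would determine the conjugacy classes via the criterion of \cite[Theorem 3.9]{TAYLOR}, which declares $G_\Delta^\alpha$ and $G_\Delta^\beta$ conjugate precisely when $\alpha\beta^{-1}\in\mu_{k'}$ with modulus $k'=m/\gcd(p,n_1,\dots,n_d,m/m_1,\dots,m/m_d)$. In case (ii) the $n_i$ are powers of $\ell$ while $p$ and each $m/m_i=m/\ell^{a}$ are prime to $\ell$, so the gcd is $1$, $\mu_{k'}=\mu_m$, and the class is unique. In case (i) the single triple gives modulus $m/\gcd(p,n,m/\ell^{a})$; since $p/\ell^{c}$ divides $m/\ell^{a}$ while $\ell^{c}$ is coprime to $m/\ell^{a}$, one has $\gcd(p,m/\ell^{a})=p/\ell^{c}$, so the modulus is $m/\gcd(p/\ell^{c},n)$ and there are $\gcd(p/\ell^{c},n)$ cosets of $\mu_{k'}$ in $\mu_m$, indexed by the stated values of $\alpha$. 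I expect the main obstacle to be establishing the uniform bound $f_i\ge c$ cleanly---in particular tracking how the freedom to take $\nu_\ell(p_i)$ small when $m_i$ is small is exactly offset by the loss $n_i x_i$ in the $m$-part---and dovetailing it with the carry count so that the two separately nonnegative contributions combine into $E\ge(d-1)c+(\text{carries})$.
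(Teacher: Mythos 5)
Your proposal is correct and follows essentially the same route as the paper: both work inside Taylor's classification $G_\Delta^\alpha$, compare $\ell$-adic valuations via Kummer's lemma (Lemma \ref{KUMMER}, Corollary \ref{minimisepartitionorder}) to force either $d=1$ when $\ell\mid p$ or all factors of the form $G(\ell^{\nu_\ell(m)},1,n_i)$ with a carry-free partition when $\ell\nmid p$, and then count classes with Taylor's conjugacy criterion, getting $\gcd(p/\ell^{\nu_\ell(p)},n)$ classes in case (i) and a unique class in case (ii). The only difference is organizational: the paper invokes minimality up front (forcing all $m_i,p_i$ to be $\ell$-powers) and then embeds $G_\Delta$ into $\prod_i G(\ell^{\nu_\ell(m)},\ell^{\nu_\ell(p)},n_i)$ before applying Kummer, whereas you prove the per-factor bound $f_i\ge\nu_\ell(p)$ directly from feasibility and defer minimality to the end, which is sound and additionally handles $\ell\nmid m$ uniformly rather than via the cuspidal table.
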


\begin{proof}
	By the cuspidal cases seen in Table \ref{table:cuspidalcases}, we only need to consider when $\ell \mid m$. Let $R_\ell$ be such that it is $G_\Delta=\prod_{i=1}^{d}G(m_i,p_i,n_i)$ or $G_\Delta^\alpha$ for some augmented partition $\Delta$ and $\alpha\in\mu_m$. The $m_i$ and $p_i$ must be powers of $\ell$, as otherwise $G_\Delta$ contains a proper reflection subgroup with the same $\ell$-adic valuation giving a contradiction of minimality. Therefore, $G_\Delta$ is a reflection subgroup of $H:=\prod_{i=1}^{d}G(\ell^{\nu_\ell(m)},\ell^{\nu_\ell(p)},n_i)$. By assumption $\nu_\ell(|H|)=\nu_\ell(|G|)$, so we have
	$$n\nu_\ell(m)-d\nu_\ell(p)+\sum_{i=1}^{d}\nu_\ell(n_i!)=n\nu_\ell(m)-\nu_\ell(p)+\nu_\ell(n!).$$
	Then by Kummer's theorem we have $d\nu_\ell(p)\leq \nu_\ell(p)$. Since $d\geq 1$ and $\nu_\ell(p)\geq 0$, we have either $\nu_\ell(p)=0$ or $d=1$.
	
	Hence, we now also assume that $\ell\mid p$, so $d=1$. This gives $G_\Delta=G(m_1,p_1,n)$ a reflection subgroup of $H=G(\ell^{\nu_\ell(m)},\ell^{\nu_\ell(p)},n)$. Now $\nu_\ell(|G_\Delta|)=\nu_\ell(|H|)$ gives 
	$$n\nu_\ell(m_1)-\nu_\ell(p_1)=n\nu_\ell(m)-\nu_\ell(p).$$ Also, since $\frac{m_1}{p_1}\mid\frac{m}{p}$ we have $$\nu_\ell(m_1)-\nu_\ell(p_1)\leq\nu_\ell(m)-\nu_\ell(p).$$ Subtracting this from the above equation gives $\nu_\ell(m_1)\geq\nu_\ell(m)$, since $n>1$. Combining this with $m_1\mid m$ we have $\nu_\ell(m_1)=\nu_\ell(m)$, and then $\nu_\ell(p_1)=\nu_\ell(p)$ follows. So by \cite[Theorem 3.8]{TAYLOR} we know that there are $\text{gcd}(\frac{p}{\ell^{\nu_\ell(p)}},n)$ $\ell$-Sylow classes of reflection subgroups, each class made up of $G_2$-conjugates of $G_\Delta^\alpha=G(\ell^{\nu_\ell(m)},\ell^{\nu_\ell(p)},n)$, where $\alpha\in\{e^{\frac{i\pi k}{m}} \  | \ 0 \leq k\leq \text{gcd}(\frac{p}{\ell^{\nu_\ell(p)}},n)-1 \}$.
	
	We now assume $\ell\nmid p$. Hence, $G_\Delta=\prod_{i=1}^{d} G(m_i,p_i,n_i)$ is a subgroup of $H=\prod_{i=1}^{d}G(\ell^{\nu_\ell(m)},1,n_i)$. If $\nu_\ell(m_i)=\nu_\ell(m)$ for some $i$ then $\nu_\ell(p_i)=0$ so the group orders have equal $\ell$-adic valuations. Now if $\nu_\ell(m_i)=\nu_\ell(m)-x_i,$ for some $x_i\in\Z^+$, then $\nu_\ell(p_i)=-n_ix_i$, giving a contradiction. Hence, $G_\Delta= \prod_{i=1}^{d}G(\ell^{\nu_\ell(m)},1,n_i)$ for some partition $(n_i)\vdash n$. Corollary \ref{minimisepartitionorder} gives us the reflection subgroup $\prod_{i=0}^{k}G(\ell^{\nu_\ell(m)},1,\ell^i)^{b_i}$, which is clearly also minimal with respect to containment. The conjugacy class of groups of this type is unique by \cite[Theorem 3.8]{TAYLOR} and any reflection subgroup preserving the $\ell$-adic valuation of $G_3$ will contain an element in this conjugacy class. Hence, the result follows.
\end{proof}

\begin{theorem}\label{g3reflection}
	Let $\ell$ be a prime divisor of the order of $G_3=G(m,1,1)$, where $m>1$. The $\ell$-Sylow reflection subgroup of $G_3$ is $G(\ell^{\nu_\ell(m)},1,1)$.
\end{theorem}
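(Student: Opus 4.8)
The plan is to exploit the fact that $G_3=G(m,1,1)$ is one-dimensional, which makes the structure of its reflection subgroups completely transparent. First I would record that, since $A(m,1,1)$ consists of the maps $\hat{\theta}$ with $\theta(1)\in\mu_m$ and $\mathrm{Sym}(1)$ is trivial, we have $G(m,1,1)\cong\mathcal{C}_m$, the cyclic group acting on $V$ by scalar multiplication by $m$th roots of unity. Being cyclic, $G_3$ is abelian and hence has a \emph{unique} $\ell$-Sylow subgroup, namely the cyclic subgroup of order $\ell^{\nu_\ell(m)}$.

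Next I would argue that this $\ell$-Sylow subgroup is itself a reflection subgroup. Because $\dim V=1$, the only hyperplane in $V$ is the zero subspace, so every nonidentity element of $G_3$ has fixed space equal to this hyperplane and is therefore a complex reflection. Consequently every subgroup of $G_3$ is generated by reflections; in particular the $\ell$-Sylow subgroup is a reflection subgroup of $G_3$. Since a reflection subgroup that \emph{equals} an $\ell$-Sylow subgroup is trivially minimal with respect to containing an $\ell$-Sylow subgroup, and since abelianness forces this $\ell$-Sylow subgroup to be unique, the $\ell$-Sylow class of reflection subgroups consists of this single subgroup.

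Finally I would identify this subgroup in the $G(m',p',n')$ notation. The triple $(\ell^{\nu_\ell(m)},1,1)$ is feasible for $(m,1,1)$, since $1\mid\ell^{\nu_\ell(m)}$, $\ell^{\nu_\ell(m)}\mid m$ by definition of $\nu_\ell$, $n'=1\le n=1$, and $\tfrac{\ell^{\nu_\ell(m)}}{1}\mid\tfrac{m}{1}$; and the associated group $G(\ell^{\nu_\ell(m)},1,1)$ is precisely the cyclic group of order $\ell^{\nu_\ell(m)}$ inside $\mathcal{C}_m$. This yields $R_\ell=G(\ell^{\nu_\ell(m)},1,1)$. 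There is essentially no obstacle here: the entire content is the one-dimensional observation that every nontrivial element is a reflection, after which minimality and uniqueness are immediate from abelianness. The only point requiring a little care is to match the cyclic $\ell$-Sylow subgroup with the correct feasible triple via Theorem \ref{reflectionsandparabolics}.
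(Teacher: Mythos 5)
Your proposal is correct and is precisely the elaboration of the argument the paper leaves as ``Trivial'': in dimension one every nonidentity element of $G(m,1,1)\cong\mathcal{C}_m$ is a reflection, so the unique $\ell$-Sylow subgroup is itself a reflection subgroup, namely $G(\ell^{\nu_\ell(m)},1,1)$, and minimality and uniqueness follow at once. No gaps; your version simply makes explicit what the paper takes for granted.
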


\begin{proof}
	Trivial.
\end{proof}

\vspace{12pt}
\section{Sylow subgroups of Unitary reflection groups}\label{observe}

Our classification of $\ell$-Sylow classes of reflection subgroups significantly reduces the work required to describe the group isomorphism types of the $\ell$-Sylow subgroups of unitary reflection groups. An $\ell$-Sylow subgroup of a general unitary reflection group is the direct product of particular $\ell$-Sylow subgroups of the irreducible components. Furthermore, an $\ell$-Sylow subgroup of an irreducible unitary reflection group $G$ will be an $\ell$-Sylow subgroup of $R_\ell$. Therefore, to describe the $\ell$-Sylow subgroups of all unitary reflection groups it is sufficient to describe the $\ell$-Sylow subgroups of the supercuspidal cases listed in Table \ref{table:sylowcases}. We will now list the $\ell$-Sylow subgroups up to group isomorphism for all the supercuspidal cases. The description of the isomorphism classes of the unitary reflection groups can be found in \cite[Chap. 6 \S 2-\S4 and Chap. 8 \S 10]{GUSTAY}. We will require the following well known result regarding the $\ell$-Sylow subgroups of the symmetric group that can be found in \cite[Pg. 82]{HALL}.

\begin{proposition}\label{sylowsym}
	The $\ell$-Sylow subgroups of Sym$(n)$ are isomorphic to $\prod_{i=1}^{k} [C_{\ell^i}^{(i)}]^{a_i}$,
	where $n=(a_ka_{k-1}...a_1a_0)_\ell$ and $C_{\ell^i}^{(i)}$ is the iterated wreath product of $i$ copies of the cyclic group of order $\ell^i$.
\end{proposition}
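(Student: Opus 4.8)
The plan is to treat the prime-power case $n=\ell^i$ first by induction and then assemble the general case as a direct product over the base-$\ell$ digits of $n$. Write $W_i$ for the $i$-fold iterated wreath product appearing in the statement, so that $W_i\cong W_{i-1}\wr C_\ell$ with $W_0$ trivial and $W_1\cong C_\ell$; the goal of the first stage is to show that $W_i$ is an $\ell$-Sylow subgroup of $\mathrm{Sym}(\ell^i)$.

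For the induction I would let $\mathrm{Sym}(\ell^i)$ act on a set of $\ell^i$ points partitioned into $\ell$ blocks of size $\ell^{i-1}$. The stabiliser of this block system is the imprimitive wreath product $\mathrm{Sym}(\ell^{i-1})\wr\mathrm{Sym}(\ell)$, of order $((\ell^{i-1})!)^\ell\cdot\ell!$. Because $\nu_\ell$ is additive, the $\ell$-part of this order is $(\ell\text{-part of }(\ell^{i-1})!)^\ell\cdot\ell$, and the subgroup $P\wr C_\ell$ realises it, where $P$ is an $\ell$-Sylow subgroup of $\mathrm{Sym}(\ell^{i-1})$ and $C_\ell$ is generated by an $\ell$-cycle in $\mathrm{Sym}(\ell)$; being built from $\ell$-groups it is itself an $\ell$-group, hence $\ell$-Sylow in the stabiliser. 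A one-line computation, $\ell\,\nu_\ell((\ell^{i-1})!)+1=(\ell^i-1)/(\ell-1)=\nu_\ell((\ell^i)!)$, shows this order is already the full $\ell$-part of $|\mathrm{Sym}(\ell^i)|$, so $P\wr C_\ell$ is $\ell$-Sylow in all of $\mathrm{Sym}(\ell^i)$. By the inductive hypothesis $P\cong W_{i-1}$, whence $P\wr C_\ell\cong W_i$, closing the induction.

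For general $n=(a_k\cdots a_1a_0)_\ell$ I would partition $\{1,\dots,n\}$ into $a_i$ blocks of size $\ell^i$ for each $i$, which exhausts the $n=\sum_i a_i\ell^i$ points. The resulting Young subgroup $\prod_i\mathrm{Sym}(\ell^i)^{a_i}$ embeds in $\mathrm{Sym}(n)$, and by the first stage its $\ell$-Sylow subgroup is $\prod_i W_i^{a_i}=\prod_{i=1}^{k}[C_{\ell^i}^{(i)}]^{a_i}$ (the $i=0$ factor being trivial). Its order is $\ell^{\sum_i a_i\nu_\ell((\ell^i)!)}$, and applying Lemma \ref{KUMMER} to the partition whose parts are the $a_i$ copies of $\ell^i$ gives exactly zero carries, since each digit $a_i$ is less than $\ell$; hence $\sum_i a_i\nu_\ell((\ell^i)!)=\nu_\ell(n!)$. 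Thus the displayed direct product is an $\ell$-subgroup of $\mathrm{Sym}(n)$ of the maximal possible order, and so is $\ell$-Sylow.

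The content here is bookkeeping rather than conceptual: the only points demanding care are the additivity argument that the $\ell$-Sylow subgroup of $A\wr\mathrm{Sym}(\ell)$ is $(\ell\text{-Sylow of }A)\wr C_\ell$ and nothing larger, and the verification at each stage that the candidate subgroup attains the $\ell$-adic valuation of the relevant factorial, which is what upgrades ``$\ell$-subgroup'' to ``$\ell$-Sylow subgroup.'' The \emph{main obstacle} is thus simply keeping the order computations aligned with $\nu_\ell(n!)$; once Lemma \ref{KUMMER} supplies the factorial valuation, everything follows from counting.
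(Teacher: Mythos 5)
Your proof is correct. Note first that the paper does not actually prove this proposition: it is quoted as a well-known result with a citation to Hall, so there is no in-text argument to compare against; what you have written is essentially the classical Kaloujnine-style proof that the cited source contains, and both halves of it check out. The valuation identity $\ell\,\nu_\ell\bigl((\ell^{i-1})!\bigr)+1=(\ell^i-1)/(\ell-1)=\nu_\ell\bigl((\ell^i)!\bigr)$ does exactly the work needed to promote the $\ell$-group $P\wr C_\ell$ to a Sylow subgroup of $\mathrm{Sym}(\ell^i)$ (you do not even need the intermediate remark that it is Sylow in the block stabiliser --- being an $\ell$-subgroup of $\mathrm{Sym}(\ell^i)$ of order $\ell^{\nu_\ell((\ell^i)!)}$ already suffices), and the zero-carries application of Lemma \ref{KUMMER} to the partition of $n$ into $a_i$ parts of size $\ell^i$ correctly shows that the Young subgroup captures the full $\ell$-part of $n!$, so its Sylow subgroup $\prod_i W_i^{a_i}$ is Sylow in $\mathrm{Sym}(n)$. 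One point you should make explicit rather than silently repair: the group you construct, $W_i=C_\ell\wr\cdots\wr C_\ell$ with $i$ copies of the cyclic group of order $\ell$, is not literally what the statement says, which reads ``iterated wreath product of $i$ copies of the cyclic group of order $\ell^i$''; taken at face value that would give a group of order far exceeding $\ell^{\nu_\ell((\ell^i)!)}$ and could not be a subgroup of $\mathrm{Sym}(\ell^i)$ at all. The statement's wording is a slip (the intended meaning, consistent with the order counts in Section \ref{observe}, is that $C_{\ell^i}^{(i)}$ is the $i$-fold wreath power of $C_\ell$ acting on $\ell^i$ points), and your argument proves the corrected statement.
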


We can now describe the $\ell$-Sylow subgroups up to isomorphism for the supercuspidal cases:

\begin{itemize}
	\item $G(1,1,\ell^i)$ is isomorphic to  $\text{Sym}(\ell^i)$. By Proposition \ref{sylowsym} the $\ell$-Sylow subgroups are isomorphic to  $C_{\ell^i}^{(i)}$. 
	\item $G(\ell^i,\ell^j,n)$ is isomorphic to the semidirect product $A(\ell^i,\ell^j,n)\rtimes \text{Sym}(n)$. By Proposition \ref{sylowsym} the $\ell$-Sylow subgroups are isomorphic to  $A(\ell^i,\ell^j,n)\rtimes \prod_{\zeta=1}^{k}[C_{\ell^\zeta}^{(\zeta)}]^{a_\zeta}$,
	where $n=(a_ka_{k-1}...a_1a_0)_\ell$. 
	\item $G(\ell^i,1,\ell^j)$ is isomorphic to the semidirect product $A(\ell^i,1,\ell^j)\rtimes\text{Sym}(\ell^j)$. By Proposition \ref{sylowsym} the $\ell$-Sylow subgroups are isomorphic to $A(\ell^i,1,\ell^j)\rtimes C_{\ell^j}^{(j)}$.
	\item $G(\ell^i,1,1)$ has $\ell$-Sylow subgroup being the whole group.
	\item $G_4=L_2$ has a unique $2$-Sylow subgroup isomorphic to a quaternion group.
	\item $G_8$ has cyclic groups of order $3$ as its $3$-Sylow subgroups.
	\item $G_{12}$ is isomorphic to $\text{GL}_2(3)$. The $2$-Sylow subgroup is the semidihedral group of order $16$.
	\item $G_{16}$ is isomorphic to the direct product of a cyclic group of order $5$ and the binary icosahedral group. Hence, the $2$-Sylow subgroup of $G_{16}$ is the $2$-Sylow subgroup of binary icosahedral group, which is the quaternion group. 
	\item $G_{16}$ has cyclic groups of order $3$ as its $3$-Sylow subgroups.
	\item $G_{20}$ has cyclic groups of order $5$ as its  $5$-Sylow subgroups.
	\item $G_{24}=J_3^{(4)}$ has cyclic groups of order $7$ as its $7$-Sylow subgroups.
	\item From \cite[Table 3]{GUSTAY} it is clear that the first three standard generators of $L_4$ generate $L_3$ and \cite[Thm. 8.43]{GUSTAY} shows $L_4$ is isomorphic to $C_3\times \text{Sp}_4(3)$. Hence, the natural homomorphism from $L_4$ to its quotient modulo its central subgroup $C_3$ restricts to an embedding of $L_3$ in $\text{Sp}_4(3)$ since the central $C_3$ is not contained in $L_3$.
	
	By comparing orders we see that the  $3$-Sylow subgroups of $G_{25}=L_3$ are isomorphic to the $3$-Sylow subgroups of Sp$_4(3)$. Regard Sp$_4(3)$ as the group preserving the alternating form with matrix $\left( {\begin{array}{cc}
		0 & J \\
		-J & 0 \\
		\end{array} } \right),$ where $J=\left( {\begin{array}{cc}
		0 & 1 \\
		1 & 0 \\
		\end{array} } \right)$. Then the set of upper unitriangular matrices of Sp$_4(3)$ is a $3$-Sylow subgroup. We can describe the $3$-Sylow subgroup as the semidirect product $E\rtimes H$, where $E\cong C_3^3$ is an elementary abelian group and $H\cong C_3$. The group $E$ consists of the matrices $\left( {\begin{array}{cc}
		I & A \\
		0 & I \\
		\end{array} } \right)$ where $A=\left( {\begin{array}{cc}
			a & b \\
			c & a \\
	\end{array} } \right)$ and $H$ consists of the matrices $\left( {\begin{array}{cc}
	D & 0 \\
	0 & D^{-1} \\
	\end{array} } \right)$ where $D=\left( {\begin{array}{cc}
		1& d \\
		0 & 1 \\
\end{array} } \right)$ with $a,b,c,d\in\F_3$.
	 
	\item $G_{32}=L_4$  is isomorphic to $C_3\times\text{Sp}_4(3)$. By definition of the symplectic group, Sp$_4(3)$ has a subgroup isomorphic to $(\text{SL}_2(3)\times\text{SL}_2(3))\rtimes C_2$, where the $C_2$ acts by swapping the factors. Hence the $2$-Sylow subgroup of $L_4$ can be described as $(Q_8\times Q_8)\rtimes C_2$ where $C_2$ acts by swapping the factors and $Q_8$ is the quaternion group.
	\item $G_{32}=L_4$ has cyclic groups of order $5$ as its $5$-Sylow subgroups.
	\item $G_{35}=E_6$ has a subgroup isomorphic to $L_3$ by \cite[Table 4]{BREWER}, which contains a $3$-Sylow subgroup of $E_6$. Hence, the $3$-Sylow subgroup of $E_6$ has the same description as the $3$-Sylow subgroup of $G_{25}=L_3$ given above.
\end{itemize}
\vspace{12pt}
\section{Acknowledgements}
The author would like to thank their PhD supervisor, Anthony Henderson, for many helpful discussions regarding this paper. The author also acknowledges the support from the School of Mathematics and Statistics at The University of Sydney, where the research for this paper took place. The author also thanks the referee for many helpful comments, particularly with the description of the $3$-Sylow subgroup of $G_{25}$.

\newpage
\section{Tables}\label{tables}

We use the same notation as in \cite[Section 5]{TAYLOR} and \cite[Chapter 6]{GUSTAY}. We note \cite[Section 5]{TAYLOR} omits defining $B_n^{(3)} := G(3,1,n)$. 

\begin{table}[H]
	\centering
	\caption{Type of $P_\ell$ in $G$}
	\scalebox{0.7}{
		\begin{tabular}{ | c | c | c | c | c | c |}
			\hline
			{$G$} & $|G|$ & $\ell$ & $P_\ell$ & $|P_\ell|$ & $\ell$ cuspidal \\ \hline \hline
			\begin{tabular}{@{}c@{}}  $G_1=G(1,1,n),$ \\ $n\geq 2$ \end{tabular} & $n!$ & $\ell \mid n!$ & $\displaystyle\prod_{i=1}^{k} G(1,1,\ell^i)^{b_i}$ & $\displaystyle\prod_{i=1}^{k} (\ell^i!)^{b_i}$ & $n=\ell^q$ for $q\geq 2$ \\ \hline
			\begin{tabular}{@{}c@{}} $G_2=G(m,p,n),$  \\ $m>1, n>1$ \end{tabular} & $\frac{m^nn!}{p}$  & $\ell \mid \frac{m^nn!}{p}$  & $\left\{ \begin{array}{ll}
			G(m,p,n) & \text{for} \ \ell\mid m, \\
			\displaystyle\prod_{i=1}^{k} G(1,1,\ell^i)^{b_i} & \text{for} \ \ell \nmid m
			\end{array} \right.$ & $\left\{ \begin{array}{ll}
			\frac{m^nn!}{p} & \text{for} \ \ell\mid m, \\
			\displaystyle\prod_{i=1}^{k} (\ell^i!)^{b_i} & \text{for} \ \ell \nmid m
			\end{array} \right.$ & $\ell\mid m$ \\ \hline
			\begin{tabular}{@{}c@{}}  $G_3=G(m,1,1),$ \\ $m>1$ \end{tabular} & $m$ & $\ell \mid m$ & $G(m,1,1)$ & $m$ & $\ell\mid m$ \\ \hline
			$G_4=L_2$ & $2^3 \cdot 3$ & $2$ & $L_2$ & $2^3 \cdot 3$ & $2$ \\ \cline{3-5}
			& & $3$ & $L_1$ & $3$ & \\ \hline
			$G_6=C_4\circ L_2$ & $2^4 \cdot 3$ & $2$ & $G_6$ & $2^4 \cdot 3$ & $2$  \\ \cline{3-5}
			& & $3$ & $L_1$ & $3$ & \\ \hline
			\begin{tabular}{@{}c@{}}  $G_k,$ \\ $k=5$ and $7\leq k\leq 22$ \end{tabular} & $|G_k|$ & $\ell \mid |G_k|$ & $G_k$ & $|G_k|$ & $\ell \mid |G_k|$ \\ \hline
			$G_{23}=H_3$ & $2^3 \cdot 3 \cdot 5$ & $2$ & $H_3$ & $2^3 \cdot 3 \cdot 5$ & $2$ \\ \cline{3-5}
			& & $3$ & $A_2$ & $2 \cdot 3$ & \\ \cline{3-5}
			& & $5$ & $D_2^{(5)}$ & $2 \cdot 5$ & \\ \hline
			$G_{24}=J_3^{(4)}$ & $2^4 \cdot 3 \cdot 7$ & $2, 7$ & $J_3^{(4)}$ & $2^4 \cdot 3 \cdot 7$ & $2,7$ \\ \cline{3-5}
			& & $3$ & $A_2$ & $2 \cdot 3$ &  \\ \hline
			$G_{25}=L_3$ & $2^3 \cdot 3^4$ & $2$ & $L_2$ & $2^3 \cdot 3$ & $3$ \\ \cline{3-5}
			& & $3$ & $L_3$ & $2^3 \cdot 3^4$ & \\ \hline
			$G_{26}=M_3$ & $2^4 \cdot 3^4$ & $2, 3$ & $M_3$ & $2^4 \cdot 3^4$ & $2,3$ \\ \hline
			$G_{27}=J_3^{(5)}$ & $2^4 \cdot 3^3 \cdot 5$ & $2, 3$ & $J_3^{(5)}$ & $2^4 \cdot 3^3\cdot 5$ & $2,3$ \\ \cline{3-5}
			& & $5$ & $D_2^{(5)}$ & $2 \cdot 5$ & \\ \hline
			$G_{28}=F_4$ & $2^{7} \cdot 3^2$ & $2, 3$ & $F_4$ & $2^{7} \cdot 3^2$ & $2,3$ \\ \hline
			$G_{29}=N_4$ & $2^9 \cdot 3 \cdot 5$ & $2, 5$ & $N_4$ & $2^9 \cdot 3 \cdot 5$ & $2,5$ \\ \cline{3-5}
			& & $3$ & $A_2$ & $2\cdot 3$ & \\ \hline
			$G_{30}=H_4$ & $2^6 \cdot 3^2 \cdot 5^2$ & $2, 3, 5$ & $H_4$ & $2^6 \cdot 3^2 \cdot 5^2$ & $2, 3, 5$ \\ \hline
			$G_{31}=O_4$ & $2^{10} \cdot 3^2 \cdot 5$ & $2, 3, 5$ & $O_4$ & $2^{10} \cdot 3^2 \cdot 5$ & $2, 3, 5$ \\ \hline
			$G_{32}=L_4$ & $2^{7} \cdot 3^5 \cdot 5$ & $2, 3, 5$ & $L_4$ & $2^{7} \cdot 3^5 \cdot 5$ & $2, 3, 5$ \\ \hline
			$G_{33}=K_5$ & $2^{7} \cdot 3^4 \cdot 5$ & $2$ & $K_5$ & $2^{7} \cdot 3^4 \cdot 5$ & $2$ \\ \cline{3-5}
			& & $3$ & $D_4^{(3)}$ & $2^{3} \cdot 3^4$ & \\ \cline{3-5}
			& & $5$ & $A_4$ & $2^3\cdot 3\cdot 5$ & \\ \hline
			$G_{34}=K_6$ & $2^{9} \cdot 3^7 \cdot 5\cdot 7$ & $2, 3, 7$ & $K_6$ & $2^{9} \cdot 3^7 \cdot 5\cdot 7$ & $2,3,7$ \\ \cline{3-5}
			& & $5$ & $A_4$ & $2^3\cdot 3\cdot 5$ & \\ \hline
			$G_{35}=E_6$ & $2^7 \cdot 3^4 \cdot 5$ & $2$ & $D_5$ & $2^7\cdot 3\cdot5$ & $3$ \\ \cline{3-5}
			& & $3$ & $E_6$ & $2^7 \cdot 3^4 \cdot 5$ & \\ \cline{3-5}
			& & $5$ & $A_4$  & $2^3\cdot 3\cdot 5$ & \\ \hline
			$G_{36}=E_7$ & $2^{10} \cdot 3^4 \cdot 5 \cdot 7$ & $2$ & $E_7$ & $2^{10}\cdot 3^4\cdot5\cdot 7$ & $2$ \\ \cline{3-5}
			& & $3$ & $E_6$ & $2^7 \cdot 3^4 \cdot 5$ & \\ \cline{3-5}
			& & $5$ & $A_4$ & $2^3\cdot 3\cdot 5$ & \\ \cline{3-5}
			& & $7$ & $A_6$ & $2^4\cdot 3^2\cdot 5\cdot 7$ & \\ \hline
			$G_{37}=E_8$ & $2^{14} \cdot 3^5 \cdot 5^2 \cdot 7$ & $2, 3, 5$ & $E_8$ & $2^{14} \cdot 3^5 \cdot 5^2 \cdot 7$ & $2,3,5$ \\ \cline{3-5}
			& & $7$ & $A_6$ & $2^4\cdot 3^2\cdot 5\cdot 7$ & \\ \hline
	\end{tabular}}
	\label{table:parabolic}
\end{table}

\newpage

\begin{table}[H]
	\caption{Cuspidal cases of $G$}
	\centering
		\begin{tabular}{ | c | c | c | }
			\hline
			{$G$} & Cuspidal $\ell$ & $| G |$  \\ \hline \hline
			$G(1,1,\ell^i)$ for $i\in\Z^+$ & $\ell$  & $(\ell^i)!$ \\ \hline
			$G(m,p,n)$ for $\ell\mid m$ & $\ell$ & $\frac{m^nn!}{p}$ \\ \hline
			$G(m,1,1)$ & $\ell$  & $m$ \\ \hline
			$G_4=L_2$ & $2$ & $2^3\cdot3$ \\ \hline
			$G_6$ & $2$ & $2^4\cdot3$ \\ \hline
			$G_k$, $k=5$ and $7\leq k \leq 22$ & $\ell \mid |G_k|$ & $|G_k|$ \\ \hline
			$G_{23}=H_3$ & $2$ & $2^3\cdot3\cdot 5$ \\ \hline
			$G_{24}=J_3^{(4)}$ & $7$ & $2^4\cdot3\cdot 7$ \\ \hline
			$G_{25}=L_3$ & $3$ & $2^3\cdot 3^4$ \\ \hline
			$G_{26}=M_3$ & $2,3$ & $2^4\cdot3^4$ \\ \hline
			$G_{27}=J_3^{(5)}$ & $2,3$ & $2^4\cdot3^3\cdot 5$ \\ \hline
			$G_{28}=F_4$ & $2,3$ & $2^7\cdot 3^2$ \\ \hline
			$G_{29}=N_4$ & $2,5$ & $2^9\cdot 3\cdot 5$ \\ \hline
			$G_{30}=H_4$ & $2,3,5$ & $2^{10}\cdot 3^2\cdot 5$ \\ \hline
			$G_{31}=O_4$ & $2,3,5$ & $2^7\cdot 3^5\cdot 5$ \\ \hline
			$G_{32}=L_4$ & $2,5$ & $2^7\cdot3^5\cdot 5$ \\ \hline
			$G_{33}=K_5$ & $2$ & $2^7\cdot 3^4\cdot 5$ \\ \hline
			$G_{34}=K_6$ & $2,3,7$ & $2^9\cdot 3^7\cdot 5\cdot 7$ \\ \hline
			$G_{35}=E_6$ & $3$ & $2^7\cdot3^4\cdot5$ \\ \hline
			$G_{36}=E_7$ & $2$ & $2^{10}\cdot3^4\cdot5\cdot 7$ \\ \hline
			$G_{37}=E_8$ & $2,3,5$ & $2^{14}\cdot3^5\cdot5^2\cdot 7$ \\ \hline
	\end{tabular}
	\label{table:cuspidalcases}
\end{table}

\begin{table}[H]
	\caption{Type of $R_\ell$ in $G$ excluding primitive G of rank $2$}
	\centering
	\scalebox{0.55}{
		\begin{tabular}{ | c | c | c | c | c |}
			\hline
			{$G$} & $|G|$ & $\ell$ & $R_\ell$ & $|R_\ell|$ \\ \hline \hline
			\begin{tabular}{@{}c@{}}  $G_1=G(1,1,n),$ \\ $n\geq 2$ \end{tabular} & $n!$ & $\ell \mid n!$ & $\displaystyle\prod_{i=1}^{k} G(1,1,\ell^i)^{b_i}$ & $\displaystyle\prod_{i=1}^{k} (\ell^i!)^{b_i}$ \\ \hline
			\begin{tabular}{@{}c@{}} $G_2=G(m,p,n),$  \\ $m>1, n>1$ \end{tabular} & $\frac{m^nn!}{p}$  & $\ell \mid \frac{m^nn!}{p}$  & $\left\{ \begin{array}{ll}
			\hat{\theta}_\alpha G(\ell^{\nu_\ell(m)},\ell^{\nu_\ell(p)}, n)\hat{\theta}_\alpha^{-1}, &  \\
			\text{where} \ \alpha\in\{e^{\frac{i\pi k}{m}} \  | \ 0 \leq k\leq \text{gcd}(\frac{p}{\ell^{\nu_\ell(p)}},n)-1 \} & \text{for} \  \ell \mid p \\
			\displaystyle\prod_{i=0}^{k}G(\ell^{\nu_\ell(m)},1, \ell^i)^{b_i} & \text{for} \ \ell\mid m \ \text{and} \ \ell \nmid p, \\
			\displaystyle\prod_{i=1}^{k} G(1,1,\ell^i)^{b_i} & \text{for} \ \ell \nmid m
			\end{array} \right.$ & $\left\{ \begin{array}{ll}
			\ell^{n\nu_\ell(m)-\nu_\ell(p)}n! & \text{for} \  \ell\mid p, \\
			\displaystyle\prod_{i=0}^{k} [\ell^{\ell^i\nu_\ell(m)}(\ell^i!)]^{b_i} & \text{for} \ \ell\mid m \ \text{and} \ \ell \nmid p, \\
			\displaystyle\prod_{i=1}^{k} (\ell^i!)^{b_i} & \text{for} \ \ell \nmid m
			\end{array} \right.$ \\ \hline
			\begin{tabular}{@{}c@{}}  $G_3=G(m,1,1),$ \\ $m>1$ \end{tabular} & $m$ & $\ell \mid m$ & $G(\ell^{\nu_\ell(m)},1,1)$ & $\ell^{\nu_\ell(m)}$ \\ \hline
			$G_{23}=H_3$ & $2^3 \cdot 3 \cdot 5$ & $2$ & $A_1^3$ & $2^3$ \\ \cline{3-5}
			& & $3$ & $A_2$ & $2 \cdot 3$ \\ \cline{3-5}
			& & $5$ & $D_2^{(5)}$ & $2 \cdot 5$ \\ \hline
			$G_{24}=J_3^{(4)}$ & $2^4 \cdot 3 \cdot 7$ & $2$ & $A_1\times B_2$ & $2^4$  \\ \cline{3-5} & & $3$ & $A_2$ & $2 \cdot 3$ \\ \cline{3-5}
			& & $7$ & $J_3^{(4)}$ & $2^4 \cdot 3 \cdot 7$  \\ \hline
			$G_{25}=L_3$ & $2^3 \cdot 3^4$ & $2$ & $L_2$ & $2^3 \cdot 3$  \\ \cline{3-5}
			& & $3$ & $L_3$ & $2^3 \cdot 3^4$ \\ \hline
			$G_{26}=M_3$ & $2^4 \cdot 3^4$ & $2$ & $A_1\times L_2$ & $2^4\times 3$ \\ \cline{3-5}
			& & $3$ & $L_3 \ \text{and} \ B_3^{(3)}$ & $2^3 \cdot 3^4 \ \text{and} \  2 \cdot 3^4$ \\ \hline
			$G_{27}=J_3^{(5)}$ & $2^4 \cdot 3^3 \cdot 5$ & $2$ & $A_1\times B_2$ & $2^4$ \\ \cline{3-5}
			& & $3$ & $D_3^{(3)}$ & $2^3 \cdot 3^3$ \\ \cline{3-5}
			& & $5$ & $D_2^{(5)}$ & $2 \cdot 5$ \\ \hline
			$G_{28}=F_4$ & $2^{7} \cdot 3^2$ & $2$ & $B_4 \ \text{and} \ \tilde{B}_4$ & $2^{7} \cdot 3$ \\ \cline{3-5}
			& & $3$ & $A_2^2$ & $2^2 \cdot 3^2$ \\ \hline
			$G_{29}=N_4$ & $2^9 \cdot 3 \cdot 5$ & $2$ & $D_4^{(4)}$ & $2^9 \cdot 3$ \\ \cline{3-5}
			& & $3$ & $A_2$ & $2\cdot 3$ \\ \cline{3-5}
			& & $5$ & $A_4$ and $\tilde{A}_4$ & $2^3\cdot 3 \cdot 5$  \\ \hline
			$G_{30}=H_4$ & $2^6 \cdot 3^2 \cdot 5^2$ & $2$ & $D_4$ & $2^6 \cdot 3$ \\ \cline{3-5}
			& & $3$ & $A_2^2$ & $2^2\cdot 3^2$ \\ \cline{3-5}
			& & $5$ & $[D_2^{(5)}]^2$ & $2^2\cdot5^2$  \\ \hline
			$G_{31}=O_4$ & $2^{10} \cdot 3^2 \cdot 5$ & $2$ & $B_4^{(4)}$ & $2^{10} \cdot 3$ \\ \cline{3-5}
			& & $3$ & $A_2^2$ & $2^2\cdot 3^2$ \\ \cline{3-5}
			& & $5$ & $A_4$ and $\tilde{A}_4$ & $2^3\cdot3 \cdot 5$  \\ \hline
			$G_{32}=L_4$ & $2^{7} \cdot 3^5 \cdot 5$ & $2, 5$ & $L_4$ & $2^{7} \cdot 3^5 \cdot 5$  \\ \cline{3-5}
			& & $3$ & $L_1\times L_3$ & $2^3\cdot 3^5$ \\ \hline
			$G_{33}=K_5$ & $2^{7} \cdot 3^4 \cdot 5$ & $2$ & $A_1\times D_4$ & $2^{7} \cdot 3	$ \\ \cline{3-5}
			& & $3$ & $D_4^{(3)}$ & $2^{3} \cdot 3^4$ \\ \cline{3-5}
			& & $5$ & $A_4$ & $2^3\cdot 3\cdot 5$ \\ \hline
			$G_{34}=K_6$ & $2^{9} \cdot 3^7 \cdot 5\cdot 7$ & $2$ & $D_6$ & $2^{9} \cdot 3^2 \cdot 5$ \\ \cline{3-5}
			& & $3$ & $D_6^{(3)}$ & $2^4\cdot 3^7\cdot 5$ \\ \cline{3-5}
			& & $5$ & $A_4$ & $2^3\cdot 3\cdot 5$ \\ \cline{3-5}
			& & $7$ & $A_6$ and $\tilde{A}_6$ & $2^4\cdot 3^2\cdot 5 \cdot 7$ \\ \hline
			$G_{35}=E_6$ & $2^7 \cdot 3^4 \cdot 5$ & $2$ & $D_5$ & $2^7\cdot 3\cdot5$ \\ \cline{3-5}
			& & $3$ & $E_6$ & $2^7 \cdot 3^4 \cdot 5$ \\ \cline{3-5}
			& & $5$ & $A_4$  & $2^3\cdot 3\cdot 5$ \\ \hline
			$G_{36}=E_7$ & $2^{10} \cdot 3^4 \cdot 5 \cdot 7$ & $2$ & $A_1\times D_6$ & $2^{10}\cdot 3^2\cdot5$ \\ \cline{3-5}
			& & $3$ & $E_6$ & $2^7 \cdot 3^4 \cdot 5$  \\ \cline{3-5}
			& & $5$ & $A_4$ & $2^3\cdot 3\cdot 5$ \\ \cline{3-5}
			& & $7$ & $A_6$ & $2^4\cdot 3^2\cdot 5\cdot 7$ \\ \hline
			$G_{37}=E_8$ & $2^{14} \cdot 3^5 \cdot 5^2 \cdot 7$ & $2$ & $D_8$ & $2^{14} \cdot 3^2 \cdot 5 \cdot 7$ \\ \cline{3-5}
			& & $3$ & $A_2\times E_6$ & $2^8\cdot 3^5\cdot 5$ \\ \cline{3-5}
			& & $5$ & $A_4^2$ & $2^6\cdot 3^2\cdot 5^2$ \\ \cline{3-5}
			& & $7$ & $A_6$ & $2^4\cdot 3^2\cdot 5\cdot 7$ \\ \hline
	\end{tabular}}
	\label{table:reflection}
\end{table}

\begin{table}[H]
	\caption{Type of $R_\ell$ in primitive $G$ of rank $2$}
	\centering
	\scalebox{0.9}{
		\begin{tabular}{ | c | c | c | c | c |}
			\hline
			{$G$} & $|G|$ & $\ell$ & $R_\ell$ & $|R_\ell|$ \\ \hline \hline
			$G_4=L_2$ & $2^3 \cdot 3$ & $2$ & $L_2$ & $2^3 \cdot 3$   \\ \cline{3-5}
			& & $3$ & $L_1$ & $3$ \\ \hline
			$G_{5}=\mathcal{C}_3\times \mathcal{T}$ & $2^3 \cdot 3^2$ & $2$ & $L_2$ and $\tilde{L}_2$ & $2^3 \cdot 3$ \\ \cline{3-5}
			& & $3$ & $L_1^2$ & $3^2$ \\ \hline
			$G_{6}=\mathcal{C}_4\circ L_2$ & $2^4 \cdot 3$ & $2$ & $B_2^{(4)}$ & $2^4$  \\ \cline{3-5} 
			& & $3$ & $L_1$ & $3$  \\ \hline
			$G_{7}=\mathcal{C}_{3}\times(\mathcal{C}_4\circ\mathcal{T})$ & $2^4 \cdot 3^2$ & $2$ & $B_2^{(4)}$ & $2^4$  \\ \cline{3-5}
			& & $3$ & $L_1^2$ & $3^2$ \\ \hline
			$G_{8}=\mathcal{T}\mathcal{C}_4$ & $2^5 \cdot 3$ & $2$ & $G(4,1,2)$ & $2^5$ \\ \cline{3-5}
			& & $3$ & $G_8$ & $2^5\cdot3$  \\ \hline
			$G_{9}=\mathcal{C}_8\circ\mathcal{O}$ & $2^6 \cdot 3$ & $2$ & $G(8,2,2)$ & $2^6$ \\ \cline{3-5}
			& & $3$ & $A_2$, $\tilde{A_2}$ and $G_{8}$ & $2\cdot 3$, $2\cdot 3$ and $2^5\cdot 3$  \\ \hline
			$G_{10}=\mathcal{C}_{3}\times\mathcal{T}\mathcal{C}_4$ & $2^{5} \cdot 3^2$ & $2$ & $G(4,1,2)$ & $2^{5}$ \\ \cline{3-5}
			& & $3$ & $L_1^2$ & $3^2$ \\ \hline
			$G_{11}=\mathcal{C}_{3}\times(\mathcal{C}_8\circ\mathcal{O})$ & $2^6 \cdot 3^2$ & $2$ & $G(8,2,2)$ & $2^6$ \\ \cline{3-5}
			& & $3$ & $L_1^2$ & $3^2$  \\ \hline
			$G_{12}\cong GL_2(\mathbb{F}_3)$ & $2^4 \cdot 3$ & $2$ & $G_{12}$ & $2^4 \cdot 3$  \\ \cline{3-5}
			& & $3$ & $A_2$ and $\tilde{A_2}$ & $2 \cdot 3$ \\ \hline
			$G_{13}=\mathcal{C}_4\circ\mathcal{O}$ & $2^{5} \cdot 3$ & $2$ & $G(8,4,2)$ & $2^{5}$ \\ \cline{3-5}
			& & $3$ & $A_2$ and $\tilde{A_2}$ & $2\cdot 3$ \\ \hline
			$G_{14}=\mathcal{C}_3\times G_{12}$ & $2^{4} \cdot 3^2$ & $2$ & $G_{12}$ & $2^{4} \cdot 3$  \\ \cline{3-5}
			& & $3$ & $L_1^2$& $3^2$ \\ \hline
			$G_{15}=\mathcal{C}_{3}\times(\mathcal{C}_4\circ\mathcal{O})$ & $2^{5} \cdot 3^2$ & $2$ & $G(8,4,2)$ & $2^{5}$ \\ \cline{3-5}
			& & $3$ & $L_1^2$ & $3^2$  \\ \hline
			$G_{16}=\mathcal{C}_5\times\mathcal{I}$ & $2^{3} \cdot 3 \cdot 5^2$ & $2,3$ & $G_{16}$ & $2^{3} \cdot 3 \cdot 5^2$ \\ \cline{3-5}
			& & $5$ & $G(5,1,1)^2$ & $5^2$  \\ \hline
			$G_{17}=\mathcal{C}_{5}\times(\mathcal{C}_4\circ\mathcal{I})$ & $2^4 \cdot 3 \cdot 5^2$ & $2$ & $B_2^{(4)}$ & $2^4$ \\ \cline{3-5}
			& & $3$ & $A_2$, $\tilde{A_2}$ and $G_{16}$ & $2\cdot 3$, $2\cdot 3$ and $2^{3} \cdot 3 \cdot 5^2$ \\ \cline{3-5}
			& & $5$ & $G(5,1,1)^2$  & $5^2$ \\ \hline
			$G_{18}=\mathcal{C}_{15}\times\mathcal{I}$ & $2^{3} \cdot 3^2 \cdot 5^2$ & $2$ & $L_2$, $\tilde{L_2}$ and $G_{16}$ & $2^3\cdot 3$, $2^3\cdot 3$ and $2^3\cdot 3\cdot 5^2$  \\ \cline{3-5}
			& & $3$ & $L_1^2$ & $3^2$  \\ \cline{3-5}
			& & $5$ & $G(5,1,1)^2$ & $5^2$ \\ \hline
			$G_{19}=\mathcal{C}_{15}\times(\mathcal{C}_4\circ\mathcal{I})$ & $2^{4} \cdot 3^2 \cdot 5^2$ & $2$ & $B_2^{(4)}$ & $2^{4}$ \\ \cline{3-5}
			& & $3$ & $L_1^2$ & $3^2$ \\ \cline{3-5}
			& & $5$ & $G(5,1,1)^2$ & $5^2$ \\ \hline
			$G_{20}=\mathcal{C}_3\times\mathcal{I}$ & $2^{3} \cdot 3^2 \cdot 5$ & $2$ & $L_2$ and $\tilde{L_2}$ & $2^{3} \cdot 3$ \\ \cline{3-5}
			& & $3$ & $L_1^2$ & $3^2$ \\ \cline{3-5}
			& & $5$ & $G_{20}$ & $2^{3} \cdot 3^2 \cdot 5$ \\ \hline
			$G_{21}=\mathcal{C}_{3}\times(\mathcal{C}_4\circ\mathcal{I})$ & $2^{4} \cdot 3^2 \cdot 5$ & $2$ & $B_2^{(4)}$ & $2^{4}$ \\ \cline{3-5}
			& & $3$ & $L_1^2$ & $3^2$ \\ \cline{3-5}
			& & $5$ & $D_2^{(5)}$, $\tilde{D_2^{(5)}}$ and $G_{20}$ & $2\cdot 5$, $2\cdot 5$ and $2^{3} \cdot 3^2 \cdot 5$ \\ \hline
			$G_{22}=\mathcal{C}_4\circ\mathcal{I}$ & $2^{4} \cdot 3 \cdot 5$ & $2$ & $B_2^{(4)}$ & $2^{4}$ \\ \cline{3-5}
			& & $3$ & $A_2$ and $\tilde{A_2}$ & $2\cdot3$ \\ \cline{3-5}
			& & $5$ & $D_2^{(5)}$ and $\tilde{D_2^{(5)}}$ & $2\cdot5$ \\ \hline
	\end{tabular}}
	\label{table:reflectionrank2}
\end{table}

\begin{table}[H]
	\caption{Supercuspidal cases of $G$}
	\centering
		\begin{tabular}{ | c | c | c | }
			\hline
			{$G$} & $\ell$-Sylow prime & $\mid G \mid$  \\ \hline \hline
			$G(1,1,\ell^i)$ for $i\in\Z^+$ & $\ell$  & $(\ell^i)!$ \\ \hline
			$G(\ell^i,\ell^j,n)$ for $j\leq i \in\Z^+, n>1$ & $\ell$  & $\ell^{in-j}n!$ \\ \hline
			$G(\ell^i,1,\ell^j)$ for $i,j\in\Z^+$ & $\ell$  & $\ell^{i\ell^j}(\ell^j)!$ \\ \hline
			$G(\ell^i,1,1)$ for $i\in\Z^+$ & $\ell$  & $\ell^i$ \\ \hline
			$G_4=L_2$ & $2$ & $2^3\cdot3$ \\ \hline
			$G_8$ & $3$ & $2^5\cdot 3$ \\ \hline
			$G_{12}$ & $2$ & $2^4\cdot 3$  \\ \hline
			$G_{16}$ & $2,3$ & $2^3\cdot 3\cdot 5^2$ \\ \hline
			$G_{20}$ & $5$ & $2^3\cdot 3^2\cdot 5$ \\ \hline
			$G_{24}=J_3^{(4)}$ & $7$ & $2^4\cdot3\cdot 7$ \\ \hline
			$G_{25}=L_3$ & $3$ & $2^3\cdot 3^4$ \\ \hline
			$G_{32}=L_4$ & $2,5$ & $2^7\cdot3^5\cdot 5$ \\ \hline
			$G_{35}=E_6$ & $3$ & $2^7\cdot3^4\cdot5$ \\ \hline
	\end{tabular}
	\label{table:sylowcases}
\end{table}

\begin{table}[H]
	\caption{Cases where $R_\ell$ is not unique up to conjugacy}
	\centering
		\begin{tabular}{ | c | c | c | c | c | c | }
			\hline
			{$G$} & $\ell$ & $R_\ell$ & $|R_\ell|$ & Conjugacy Classes \\ \hline \hline
			$G_2=G(m,p,n)$ & $\ell \mid p$  & $G(\ell^{\nu_\ell(m)},\ell^{\nu_\ell(p)}, n)$ & $\ell^{n\nu_\ell(m)-\nu_\ell(p)}n!$  & $\text{gcd}(\frac{p}{\ell^{\nu_\ell(p)}},n)$ \\ \hline
			$G_5$ & $2$ & $L_2$ & $2^3\cdot3$ & $2$ \\ \hline
			$G_9$ & $3$ & $A_2$ & $2\cdot3$ & $2$ \\
			\cline{3-5}
			& & $G_{8}$ & $2^5\cdot3$ & $1$ \\ \hline
			$G_{12}$ & $3$ & $A_2$ & $2\cdot3$ & $2$ \\ \hline
			$G_{13}$ & $3$ & $A_2$ & $2\cdot3$ & $2$ \\ \hline
			$G_{17}$ & $3$ & $A_2$ & $2\cdot3$ & $2$ \\
			\cline{3-5}
			& & $G_{16}$ & $2^3\cdot3\cdot5^2$ & $1$ \\ \hline
			$G_{18}$ & $2$ & $L_2$ & $2^3\cdot3$ & $2$ \\ \hline
			$G_{20}$ & $2$ & $L_2$ & $2^3\cdot3$ & $2$ \\ \hline
			$G_{21}$ & $5$ & $D_2^{(5)}$ & $2\cdot5$ & $2$ \\ \cline{3-5}
			& & $G_{20}$ & $2^3\cdot3^2\cdot5$ & $1$ \\ \hline
			$G_{22}$ & $3$ & $A_2$ & $2\cdot3$ & $2$ \\ \cline{2-5}
			& $5$ & $D_2^{(5)}$ & $2\cdot5$ & $2$ \\ \hline
			$G_{26}=M_3$ & $3$ & $L_3$ & $2^3 \cdot 3^4$ & $1$ \\ \cline{3-5}
			&  & $B_3^{(3)}$ & $2 \cdot 3^4$ & $1$ \\ \hline
			$G_{28}=F_4$ & $2$ & $B_4$ & $2^7\cdot 3$ & $2$ \\ \hline
			$G_{29}=N_4$ & $5$ & $A_4$ & $2^3\cdot 3\cdot 5$ & $2$ \\  \hline
			$G_{31}=O_4$ & $5$ & $A_4$ & $2^3 \cdot 3 \cdot 5$ & $2$ \\ \hline
			$G_{34}=K_6$ & $7$ & $A_6$ & $2^{4} \cdot 3^2 \cdot 5\cdot 7$ & $2$ \\ \hline
	\end{tabular}
	\label{table:notunique}
\end{table}
\newpage
\bibliographystyle{alpha}
\bibliography{references.bib}
\end{document}